\numberwithin{equation}{section}  
\newtheorem{theorem}{Theorem}[section]  
\newtheorem{lemma}[theorem]{Lemma}  
\newtheorem{proposition}[theorem]{Proposition}  
\newtheorem{corollary}[theorem]{Corollary}
\theoremstyle{definition}  
\newtheorem{definition}[theorem]{Definition}
\newtheorem{remark}[theorem]{Remark}  
\newtheorem{example}[theorem]{Example}
\newtheorem{question}[theorem]{Question}
\DeclareMathOperator{\Tor}{Tor}  
\DeclareMathOperator{\reg}{reg}
\DeclareMathOperator{\lcm}{lcm}
\newcommand{\N}{\mathbb{N}}
\newcommand{\G}{\mathcal{G}}
\begin{document}  
    
   
\title{Splittings of monomial ideals}  
  
\author{Christopher A. Francisco}  
\address{Department of Mathematics, Oklahoma State University,   
401 Mathematical Sciences, Stillwater, OK 74078}  
\email{chris@math.okstate.edu}  
\urladdr{http://www.math.okstate.edu/$\sim$chris}  
  
\author{Huy T\`ai H\`a}  
\address{Tulane University \\ Department of Mathematics \\  
6823 St. Charles Ave. \\ New Orleans, LA 70118, USA}  
\email{tai@math.tulane.edu}  
\urladdr{http://www.math.tulane.edu/$\sim$tai/}  
  
\author{Adam Van Tuyl}  
\address{Department of Mathematical Sciences \\  
Lakehead University \\  
Thunder Bay, ON P7B 5E1, Canada}  
\email{avantuyl@lakeheadu.ca}  
\urladdr{http://flash.lakeheadu.ca/$\sim$avantuyl/}  
  
\keywords{free resolutions, monomial ideals, Betti numbers, edge ideals}  
\subjclass[2000]{13D02, 13P10, 13F55, 05C99}  
\thanks{Version: \today}  

\begin{abstract}
We provide some new conditions under which the graded Betti numbers of a monomial 
ideal can be computed in terms of the graded Betti numbers of smaller ideals, thus complementing 
Eliahou and Kervaire's splitting approach.  As applications, we show that edge ideals of graphs
 are splittable, and we provide an iterative method for computing the Betti numbers of the 
cover ideals of Cohen-Macaulay bipartite graphs. Finally, we consider the frequency with which
 one can find particular splittings of monomial ideals and raise questions about ideals 
whose resolutions are characteristic-dependent.
\end{abstract}
   
\maketitle  
  
  
\section{Introduction}  

The existence of computer algebra systems like CoCoA\cite{C} and Macaulay 2 \cite{M2} has made 
it easy to compute minimal free resolutions of ideals over $R=k[x_1,\dots,x_n]$, where $k$ is a 
field. However, we still have no closed formulas for the graded Betti numbers of arbitrary 
monomial ideals like we do in the special cases of stable ideals and complete intersections. 
One natural method for computing Betti numbers of a monomial ideal $I$ is to break $I$ down 
into smaller monomial ideals $J$ and $K$, where $I=J+K$, and the set of minimal generators of 
$I$ is the disjoint union of the minimal generators of $J$ and $K$.

Taking this approach in \cite{EK}, Eliahou and Kervaire introduced the notion
of splitting a monomial ideal.
Let $J$ and $K$ be monomial ideals such that 
$\G(I)$, the unique set of minimal generators of $I$, is the disjoint union of $\G(J)$ and $\G(K)$. Then $I=J+K$ is an \textbf{Eliahou-Kervaire splitting} (abbreviated as ``E-K splitting'') if 
there exits a splitting function \[ \G(J \cap K) \to \G(J) \times \G(K) \] sending 
$w \mapsto (\phi(w),\psi(w))$ such that 

\begin{enumerate}
\item $w=\lcm(\phi(w),\psi(w))$ for all $w \in \G(J \cap K)$, and
\item for every subset $S \subset \G(J \cap K)$, $\lcm(\phi(S))$ and $\lcm(\psi(S))$ 
strictly divide $\lcm(S)$.
\end{enumerate}

When $I=J+K$ is an E-K splitting, Eliahou and Kervaire proved in \cite[Proposition 3.1]{EK} 
that 
\begin{equation*} \label{splitformula} 
(\star)~~\hspace{.5cm}
\beta_{i,j}(I) = \beta_{i,j}(J)+\beta_{i,j}(K)+\beta_{i-1,j}(J \cap K),
\end{equation*} 
where $\beta_{i,j}(I)=\dim_k \Tor_i(k,I)_j$
is the $i,j$-th graded Betti number. 
Eliahou and Kervaire actually just proved $(\star)$ for total Betti numbers.  Fatabbi 
\cite[Proposition 3.2]{Fatabbi} extended the argument to the graded case; in fact, her proof works just as well if $j$ is a multidegree. 

E-K splittings have been used in a variety of contexts. Eliahou and Kervaire used 
them to study the Betti numbers of stable ideals \cite[Section 3]{EK}. Fatabbi \cite{Fatabbi},
Valla \cite{Va}, and the first author \cite{Francisco} used
E-K splittings to yield results on the graded Betti 
numbers of some ideals of fat points. The second and third authors used E-K splittings 
extensively to investigate the resolutions of edge ideals of graphs and hypergraphs
(see \cite{HVTsplit,HVThyper}). 

A substantial obstacle in using E-K splittings, however, is that it can be 
difficult to construct the required splitting function, or even to tell whether such a function exists. Our paper was motivated by a 
simple example in Eliahou and Kervaire's paper \cite[Remark 2]{EK}
(see also our Example \ref{e.ek}). They note that if 
$S=k[x_1,\dots,x_5]$, and \[ I = (x_1x_2x_3,x_1x_3x_5,x_1x_4x_5,x_2x_3x_4,x_2x_4x_5),\] then 
there is no E-K splitting of $I$. However, there are many ways to partition 
the minimal generators of $I$ to form smaller ideals $J$ and $K$ so that the formula
$(\star)$ still holds.

This example suggests that there are other conditions on $I$, $J$, and $K$,  beyond 
the criterion of Eliahou
and Kervaire,
that imply that formula $(\star)$ holds.  In fact, we wish to axiomatize this behavior
by introducing the following definition:

\begin{definition}
Let $I, J,$ and $K$ be monomial ideals such that $\G(I)$ is the disjoint union of $\G(J)$ and $G(K)$.
Then $I = J+K$ is a {\bf Betti splitting} if 
\[\beta_{i,j}(I) = \beta_{i,j}(J)+\beta_{i,j}(K)+\beta_{i-1,j}(J \cap K) \hspace{.5cm}~~\mbox{for all $i\in \N$
and (multi)degrees $j$.}\]
\end{definition}

The goal of this paper is to understand when a monomial ideal has a Betti splitting.
Such conditions would enable us to study the graded Betti numbers of more 
monomial ideals. The approach 
of splitting monomial ideals assumes that we know some information about the minimal 
resolutions of $J$, $K$, and $J \cap K$, and thus it is natural to investigate conditions on 
the Betti numbers of those ideals that force $I=J+K$ to be a Betti splitting. Our focus is on 
constructing $J$ and $K$ so that their resolutions have little ``overlap'' with that of 
$J \cap K$.   Working with multigraded Betti numbers, as opposed to the total Betti numbers as 
in \cite{EK}, actually simplifies some of our arguments and enables us to prove stronger results than we could even with graded Betti numbers.

We begin in Section 2 by showing that Betti splittings are intimately related
to maps between Tor modules;  we find some sufficient conditions for Betti splittings, and compare the 
applicability of our results to those of Eliahou and Kervaire. In Section 3, we apply
our approach to ideals associated to graphs and hypergraphs. In particular, 
we give a very short proof that edge ideals of graphs can be split in a canonical way. In 
addition, we develop an iterative method of computing the graded Betti numbers of cover ideals 
of Cohen-Macaulay bipartite graphs. Resolving cover ideals of graphs is generally a difficult task because simply to compute the minimal generators, one has to find all minimal vertex covers of the graph, which is a NP-complete problem. We conclude in Section 4 by commenting on the ubiquity of Betti splittings that becomes clear from computational experiments in Macaulay 2 and some 
interesting cases of ideals whose resolutions are characteristic-dependent.

\noindent
{\bf Acknowledgments.} Part of this paper was completed during a Research in Teams 
week at the Banff International Research Station (BIRS), and we thank BIRS for its hospitality.  
The computer algebra systems CoCoA \cite{C} and Macaulay 2 \cite{M2} were invaluable in allowing 
us to compute examples and explore conjectures. The first author is partially supported by an NSA 
Young Investigator's Grant and an Oklahoma State University Dean's Incentive Grant. The second 
author is partially supported by Board of Regents Grant LEQSF(2007-10)-RD-A-30 and Tulane's 
Research Enhancement Fund. The third author acknowledges the support provided by NSERC. 

  
\section{Betti splittings}  

We present some conditions under which we can find a Betti splitting of a monomial ideal $I$. 
Our method differs from Eliahou and Kervaire's in part because we exploit the graded 
(or multigraded) structure of $I$. Throughout, we grade the polynomial ring $R=k[x_1,\dots,x_n]$ 
either with the standard grading $\deg x_i=1$ or with the standard multigrading, in which $\deg x_i$ 
is the $i$-th unit vector $(0,\dots,0,1,0,\dots,0)$. Only Corollary~\ref{c.onevar} requires the 
multigrading; the proofs of the other results are the same in the graded case. 

Our first result shows that understanding when a monomial ideal has a Betti splitting
is equivalent to understanding when certain maps between Tor modules are the zero map.

\begin{proposition}\label{propbettisplit}
Let $I,J$, and $K$ be monomial ideals such that $I = J+K$ and $\G(I)$ is the disjoint union of $\G(J)$ and $\G(K)$;
furthermore, consider the following short exact sequence:
\begin{equation*}
(\ddagger)\hspace{1cm} 0 \rightarrow J \cap K \stackrel{\varphi}{\rightarrow} J \oplus K 
\stackrel{\psi}{\rightarrow} J+K = I \rightarrow 0
\end{equation*}
where $\varphi(f) = (f,-f)$ and $\psi(g,h) = g+h$.  Then the following are equivalent:
\begin{enumerate}
\item[(a)] $I = J+K$ is a Betti splitting.
\item[(b)] for all $i \in \N$ and all (multi)degrees $j$, the map
\[\Tor_i(k,J\cap K)_j \stackrel{\varphi_i}{\longrightarrow} \Tor_i(k,J)_j\oplus \Tor_i(k,K)_j\]
in the long exact sequence in $\Tor$ induced from  $(\ddagger)$  is the zero map.
\item[(c)] applying the mapping cone construction to $(\ddagger)$ gives a minimal free
resolution of $I$.
\end{enumerate}
\end{proposition}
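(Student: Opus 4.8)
The plan is to deduce (a)$\Leftrightarrow$(b) from rank bookkeeping in the long exact sequence in $\Tor$ attached to $(\ddagger)$, and (b)$\Leftrightarrow$(c) by determining exactly when the mapping cone of $(\ddagger)$ is a minimal complex. Throughout one fixes a (multi)degree $j$ and uses that $(\ddagger)$, being a sequence of (multi)graded maps of degree $0$, induces a (multi)graded long exact sequence, together with the identification $\Tor_i(k,J\oplus K)\cong\Tor_i(k,J)\oplus\Tor_i(k,K)$.

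For (b)$\Rightarrow$(a): if every $\varphi_i$ is zero in each degree $j$, then in the long exact sequence
\[
\cdots\to\Tor_i(k,J\cap K)_j\xrightarrow{\varphi_i}\Tor_i(k,J)_j\oplus\Tor_i(k,K)_j\to\Tor_i(k,I)_j\to\Tor_{i-1}(k,J\cap K)_j\xrightarrow{\varphi_{i-1}}\cdots
\]
the maps $\varphi_i$ may be deleted, leaving short exact sequences $0\to\Tor_i(k,J)_j\oplus\Tor_i(k,K)_j\to\Tor_i(k,I)_j\to\Tor_{i-1}(k,J\cap K)_j\to0$ for every $i$ and $j$; comparing $k$-dimensions gives the Betti splitting identity. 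For (a)$\Rightarrow$(b): set $a_i=\beta_{i,j}(J\cap K)$, $b_i=\beta_{i,j}(J)+\beta_{i,j}(K)$, $c_i=\beta_{i,j}(I)$, and $r_i=\dim_k\operatorname{image}\varphi_i$. Applying exactness successively at the $\Tor(k,J)\oplus\Tor(k,K)$ term, then the $\Tor(k,I)$ term, then the $\Tor(k,J\cap K)$ term, the ranks of the connecting maps combine to the single relation $r_{i-1}+r_i=a_{i-1}+b_i-c_i$ for all $i\ge1$. The Betti splitting hypothesis is precisely $a_{i-1}+b_i-c_i=0$, so $r_{i-1}+r_i=0$; since each $r_i\ge0$ this forces $r_i=0$ for all $i$ (the case $i=1$ already yields $r_0=0$, so no separate handling of $\Tor_0$ is needed). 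Letting $j$ vary gives (b).

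For (b)$\Leftrightarrow$(c): choose minimal free resolutions $\mathbb{F}$ of $J\cap K$ and $\mathbb{G}=\mathbb{G}_J\oplus\mathbb{G}_K$ of $J\oplus K$, and lift $\varphi$ to a chain map $\tilde\varphi\colon\mathbb{F}\to\mathbb{G}$. Since $\varphi$ is injective, the mapping cone of $\tilde\varphi$ is a free resolution of $\operatorname{coker}\varphi=I$ whose $i$-th term is $\mathbb{G}_i\oplus\mathbb{F}_{i-1}$, of graded rank $\beta_{i,j}(J)+\beta_{i,j}(K)+\beta_{i-1,j}(J\cap K)$ in degree $j$. Because $\mathbb{F}$ and $\mathbb{G}$ are minimal, the cone differential has all of its entries in $\mf$ exactly when $\tilde\varphi$ does, i.e.\ when $\tilde\varphi\otimes_R k=0$; and, again by minimality of $\mathbb{F}$ and $\mathbb{G}$, the map $\tilde\varphi_i\otimes_R k$ is identified with $\Tor_i(k,\varphi)=\varphi_i$, an identification independent of the chosen lift. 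Hence the mapping cone is minimal for one (equivalently, every) lift if and only if $\varphi_i=0$ for all $i$, which is (b); and a minimal free complex resolving $I$ is \emph{the} minimal free resolution of $I$, which is (c).

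The only step requiring genuine care is the rank chase in (a)$\Rightarrow$(b): one must verify that the three exactness conditions really do collapse to the clean identity $r_{i-1}+r_i=a_{i-1}+b_i-c_i$, and that the argument is legitimate one internal/multidegree at a time. The remaining ingredients — the $\Tor$ long exact sequence, $\Tor$ commuting with finite direct sums, the fact that the mapping cone of a lift of an injection resolves the cokernel, and the uniqueness of the minimal free resolution — are all standard.
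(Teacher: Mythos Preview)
Your proof is correct and follows essentially the same strategy as the paper: both directions of (a)$\Leftrightarrow$(b) are dimension counts in the long exact $\Tor$ sequence, and the equivalence with (c) comes from the mapping cone resolution having the predicted ranks. The only cosmetic differences are that for (a)$\Rightarrow$(b) the paper picks the smallest $i$ with $\varphi_i\neq0$ and exhibits the Betti formula failing there, whereas you derive the clean recursion $r_{i-1}+r_i=0$; and for (c) the paper argues (a)$\Leftrightarrow$(c) directly via the rank inequality, while you route through (b)$\Leftrightarrow$(c) by analyzing when the cone differential has all entries in $\mf$. Neither variation changes the substance.
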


\begin{proof}
$(a) \Leftrightarrow (b)$.  If $\varphi_i$ is the zero map for all $i \in \N$,
then for each $i$ and (multi)degree $j$ we have a short exact sequence
\[ 0 \longrightarrow \Tor_i(k,J)_j \oplus 
\Tor_i(k,K)_j \longrightarrow \Tor_i(k,I)_j \longrightarrow \Tor_{i-1}(k,J \cap K)_j \longrightarrow 0,\]
whence $\beta_{i,j}(I) = \beta_{i,j}(J)+\beta_{i,j}(K) + \beta_{i-1,j}(J \cap K)$, i.e.,
$I = J+K$ is a Betti splitting.  

On the other hand, suppose there is
some integer $i$ and (multi)degree $j$ such that
$\Tor_i(k,J\cap K)_j \stackrel{\varphi_i}{\longrightarrow} \Tor_i(k,J)_j\oplus \Tor_i(k,K)_j$
is not the zero map.  Assume that $i$ is the smallest such integer.
We then have the exact sequence
\[ 0 \rightarrow (\operatorname{Im} \varphi_i)_j \longrightarrow \Tor_i(k,J)_j \oplus 
\Tor_i(k,K)_j \longrightarrow \Tor_i(k,I)_j \longrightarrow \Tor_{i-1}(k,J \cap K)_j \rightarrow 0.\]
This then implies that $\beta_{i,j}(I) = \beta_{i,j}(J)+\beta_{i,j}(K) + \beta_{i-1,j}(J\cap K) -
\dim_k \operatorname{Im} \varphi_i$.  Because $\dim_k (\operatorname{Im} \varphi_i)_j > 0$,
$I = J+K$ cannot be a Betti splitting. 

$(a) \Leftrightarrow (c)$.  For any monomial ideals $I,J,$ and $K$
satisfying the hypotheses, the mapping cone construction applied to $(\ddagger)$
produces a free resolution of $I$ that is not necessarily minimal.  In particular,
the mapping cone construction implies that 
\[\beta_{i,j}(I) \leq \beta_{i,j}(J) + \beta_{i,j}(K) + \beta_{i-1,j}(J\cap K) ~~\mbox{for all $i$ and $j$,}\]
Hence, this resolution is a minimal free resolution if and only if $I = J+K$
is a Betti splitting. 
\end{proof}

When $I = J+K$ is a Betti splitting, important homological invariants of $I$ are then related to the corresponding invariants of the smaller ideals. The corollary is a direct consequence of the formulas for the Betti numbers.

\begin{corollary} \label{reg-pd} Let $I = J+K$ be a Betti splitting.  Then
\begin{enumerate}
\item[(a)] $\reg(I) = \max\{\reg(J),\reg(K),\reg(J\cap K) -1\}$, and
\item[(b)] $\operatorname{pd}(I) = \max\{\operatorname{pd}(J),\operatorname{pd}(K),
\operatorname{pd}(J\cap K)+1\}$,
\end{enumerate}
where $\reg(-)$ is the regularity, and $\operatorname{pd}(-)$ is the projective dimension.
\end{corollary}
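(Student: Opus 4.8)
The plan is to read both invariants off the locations of the nonzero graded Betti numbers and feed in the Betti splitting formula. Recall that for any finitely generated graded module $M$ one has
\[
\reg(M) = \max\{\, j - i \ :\ \beta_{i,j}(M) \neq 0 \,\},
\qquad
\operatorname{pd}(M) = \max\{\, i \ :\ \beta_{i,j}(M) \neq 0 \text{ for some } j \,\}.
\]
Since $I = J+K$ is a Betti splitting, $\beta_{i,j}(I) = \beta_{i,j}(J) + \beta_{i,j}(K) + \beta_{i-1,j}(J\cap K)$, and each of the three summands is a nonnegative integer. Hence $\beta_{i,j}(I) \neq 0$ if and only if at least one of $\beta_{i,j}(J)$, $\beta_{i,j}(K)$, $\beta_{i-1,j}(J\cap K)$ is nonzero. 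In other words, the set of bidegrees $(i,j)$ with $\beta_{i,j}(I) \neq 0$ is the union of the corresponding sets for $J$ and for $K$ together with the ``shifted'' set $\{(i,j) : \beta_{i-1,j}(J\cap K) \neq 0\}$.

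For part (a), we take the maximum of $j - i$ over this union. Over the $J$-part the maximum is $\reg(J)$, and over the $K$-part it is $\reg(K)$. Over the shifted $J\cap K$-part, writing $i' = i-1$, we have $j - i = (j - i') - 1$, so the maximum is $\reg(J\cap K) - 1$. Taking the maximum of the three contributions gives $\reg(I) = \max\{\reg(J),\reg(K),\reg(J\cap K)-1\}$. For part (b), we instead take the maximum of the homological index $i$ over the same union: over the $J$-part this is $\operatorname{pd}(J)$, over the $K$-part it is $\operatorname{pd}(K)$, and over the shifted $J\cap K$-part, since $\beta_{i-1,j}(J\cap K)\neq 0$ for some $j$ exactly when $i - 1 \le \operatorname{pd}(J\cap K)$ with equality attained, the maximum is $\operatorname{pd}(J\cap K)+1$. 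Combining yields $\operatorname{pd}(I) = \max\{\operatorname{pd}(J),\operatorname{pd}(K),\operatorname{pd}(J\cap K)+1\}$.

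There is essentially no hard step here; the only point requiring a moment's care is the index shift in the $J\cap K$ term, which accounts for the $-1$ in (a) and the $+1$ in (b), and the observation that no cancellation can occur because all Betti numbers are nonnegative (this is exactly where the Betti splitting hypothesis, rather than the general mapping cone inequality from Proposition~\ref{propbettisplit}, is used). One should also tacitly assume $J$, $K$, and hence $J\cap K$ are nonzero ideals, so that all three invariants on the right-hand side are defined; if one of $J$, $K$ is zero the statement is vacuous since then $I$ equals the other ideal.
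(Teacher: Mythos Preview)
Your proof is correct and is exactly the argument the paper has in mind: the paper simply states that the corollary ``is a direct consequence of the formulas for the Betti numbers'' and gives no further details, while you have spelled out how to read $\reg$ and $\operatorname{pd}$ off the support of the Betti table and how the index shift in the $J\cap K$ term produces the $-1$ and $+1$. There is nothing to add.
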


In Eliahou and Kervaire's paper, the conditions for an E-K splitting of $I$
are used to prove that the induced map 
$\Tor_i(k,J\cap K) \stackrel{\varphi_i}{\longrightarrow} \Tor_i(k,J)\oplus \Tor_i(k,K)$ 
is the zero map for all $i$.  We can thus view the hypotheses of an E-K
splitting as one set of conditions that gives us a Betti splitting.  We are interested
in finding others;  in light of Proposition \ref{propbettisplit}, this is equivalent
to finding conditions that force the map between Tor modules to be zero. 
Our next theorem provides the basis for the other results in the section. The idea is to use 
the (multi)grading to construct $J$ and $K$ in such a way that maps between certain Tor modules 
are zero, forcing a Betti splitting.

\begin{theorem} \label{t.bettisplit}
Let $I$ be a monomial ideal in $R$, and suppose that $J$ and $K$ are monomial ideals in $R$ such 
that $\G(I)$ is the disjoint union of $\G(J)$ and $\G(K)$.  Suppose that for all $i$ and all 
(multi)degrees $j$, $\beta_{i,j}(J \cap K) > 0$ implies that $\beta_{i,j}(J)=\beta_{i,j}(K)=0$. Then 
\[ \beta_{i,j}(I) = \beta_{i,j}(J) + \beta_{i,j}(K) + \beta_{i-1,j}(J \cap K)\] for all $i$ and $j$; 
that is, $I=J+K$ is a Betti splitting.
\end{theorem}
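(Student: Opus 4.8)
The plan is to verify condition (b) of Proposition~\ref{propbettisplit}: that the connecting-type map $\varphi_i \colon \Tor_i(k, J \cap K)_j \to \Tor_i(k,J)_j \oplus \Tor_i(k,K)_j$ induced by $(\ddagger)$ is zero for every homological degree $i$ and every (multi)degree $j$. Once this is established, the Betti-number formula, and hence the fact that $I = J+K$ is a Betti splitting, follows immediately from that proposition. So the entire content of the argument is a vanishing statement for a single map, degree by degree.

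First I would fix $i$ and a (multi)degree $j$ and split into two cases according to the value of $\beta_{i,j}(J \cap K) = \dim_k \Tor_i(k, J\cap K)_j$. If $\beta_{i,j}(J \cap K) = 0$, then the source of $\varphi_i$ in degree $j$ is the zero vector space, so $\varphi_i$ restricted to degree $j$ is trivially the zero map. If instead $\beta_{i,j}(J \cap K) > 0$, then the hypothesis of the theorem kicks in and tells us that $\beta_{i,j}(J) = \beta_{i,j}(K) = 0$; that is, $\Tor_i(k,J)_j = 0$ and $\Tor_i(k,K)_j = 0$, so the \emph{target} of $\varphi_i$ in degree $j$ is zero, and again $\varphi_i$ is forced to be the zero map in that degree. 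Since $\varphi$ is a map of graded (or multigraded) $R$-modules, the induced maps $\varphi_i$ on $\Tor$ respect the grading, so checking that $\varphi_i$ vanishes in each degree $j$ suffices to conclude that $\varphi_i$ is identically zero for all $i$.

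Having shown that every $\varphi_i$ is the zero map, I would invoke the implication $(b) \Rightarrow (a)$ of Proposition~\ref{propbettisplit} to conclude that $I = J + K$ is a Betti splitting and that the displayed formula holds for all $i$ and all (multi)degrees $j$ (and hence also for the coarser grading by summing). This completes the proof.

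There is essentially no obstacle here beyond being careful that the $\Tor$ functors and the maps between them are taken in the graded (or multigraded) category so that the decomposition into graded pieces is legitimate; this is exactly the setting already set up in the statement and proof of Proposition~\ref{propbettisplit}. The only mild subtlety worth a sentence is that the hypothesis is a one-directional implication ($\beta_{i,j}(J\cap K)>0$ forces the others to vanish), so the two cases above are genuinely exhaustive and their treatments are not symmetric: in one case the source vanishes, in the other the target does. In both, $\varphi_i|_j$ maps into or out of a zero space, which is all that is needed.
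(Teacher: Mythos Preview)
Your proof is correct. Both your argument and the paper's work with the long exact sequence in $\Tor$ arising from $(\ddagger)$, but the routes diverge slightly. The paper does not invoke Proposition~\ref{propbettisplit}; instead it verifies the formula $\beta_{i,j}(I) = \beta_{i,j}(J)+\beta_{i,j}(K)+\beta_{i-1,j}(J\cap K)$ directly, by a case analysis on the pair $(\beta_{i,j}(J\cap K),\,\beta_{i-1,j}(J\cap K))$ and by reading off short exact sequences of vector spaces from the long exact sequence in each case. Your approach is more economical: you reduce the entire statement to checking that $\varphi_i$ vanishes in every degree, which requires only the single dichotomy on $\beta_{i,j}(J\cap K)$, and then you cash in Proposition~\ref{propbettisplit} $(b)\Rightarrow(a)$. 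This buys you a shorter, cleaner argument with fewer subcases; the paper's version is more self-contained in that it does not lean on the earlier proposition, but at the cost of repeating work that Proposition~\ref{propbettisplit} already packages.
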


\begin{proof}
Note that $I=J+K$, so we have a short exact sequence 
\[ 0 \longrightarrow J \cap K \longrightarrow J \oplus K \longrightarrow I \longrightarrow 0.\]
 This induces a long exact sequence in $\Tor$, which restricts to a long exact sequence of 
vector spaces upon taking (multi)graded pieces: 
\[ \cdots \longrightarrow \Tor_i(k,J \cap K)_j \longrightarrow \Tor_i(k,J)_j \oplus 
\Tor_i(k,K)_j \longrightarrow \Tor_i(k,I)_j \longrightarrow \] \[\Tor_{i-1}(k,J \cap K)_j 
\longrightarrow \Tor_{i-1}(k,J)_j \oplus \Tor_{i-1}(k,K)_j \longrightarrow \cdots\] 

Fix some $i$ and $j$, and suppose first that $\beta_{i,j}(J \cap K)= \dim_k \Tor_i(k,J \cap K)_j = 0$. 
By hypothesis, if $\beta_{i-1,j}(J \cap K) \not = 0$, then $\beta_{i-1,j}(J) = \beta_{i-1,j}(K)=0$, 
and we have a short exact sequence of vector spaces \[ 0 \longrightarrow \Tor_i(k,J)_j \oplus 
\Tor_i(k,K)_j \longrightarrow \Tor_i(k,I)_j \longrightarrow \Tor_{i-1}(k,J \cap K)_j \longrightarrow 0.\]
 Since $\dim_k$ is additive on exact sequences of vector spaces, we conclude that 
\[ \beta_{i,j}(J) + \beta_{i,j}(K) - \beta_{i,j}(I) + \beta_{i-1,j}(J \cap K) = 0\] for all $i$ and 
(multi)degrees $j$, and we have a Betti splitting.

If instead $\beta_{i-1,j}(J \cap K) = 0$, then we have an exact sequence of vector spaces 
\[ 0 \longrightarrow \Tor_i(k,J)_j \oplus \Tor_i(k,K)_j \longrightarrow \Tor_i(k,I)_j 
\longrightarrow 0,\] which again gives the desired formula for Betti numbers.

Finally, suppose $\beta_{i,j}(J \cap K) \not = 0$. Then $\beta_{i,j}(J)=\beta_{i,j}(K)=0$, and we have an 
exact sequence \[ 0 \longrightarrow \Tor_i(k,I)_j \longrightarrow \Tor_{i-1}(k,J \cap K)_j 
\longrightarrow  \Tor_{i-1}(k,J)_j \oplus \Tor_{i-1}(k,K)_j \longrightarrow \cdots\] 
If $\beta_{i-1,j}(k,J \cap K)_j=0$, then $\Tor_i(k,I)_j=0$, so $\beta_{i,j}(I)=0$, and the formula 
holds. Alternatively, if $\beta_{i-1,j}(k,J \cap K)_j \not = 0$, then our hypothesis implies 
that  $\Tor_{i-1}(k,J)_j=\Tor_{i-1}(k,K)_j=0$, and $\beta_{i,j}(I)=\beta_{i-1,j}(J \cap K)$, proving the 
Betti number formula since we are assuming $\beta_{i,j}(J)=\beta_{i,j}(K)=0$.
\end{proof}

Of course, if the conditions of Theorem~\ref{t.bettisplit} hold for all multidegrees $j$, 
then we have the Betti splitting formula for both the graded Betti numbers and total Betti 
numbers of $I$ in terms of those of $J$, $K$, and $J \cap K$. Additionally, we have an 
easy corollary when $J$ and $K$ both have linear resolutions.

\begin{corollary} \label{c.bothlinear}
Let $I$ be a monomial ideal in $R$, and suppose that $J$ and $K$ are monomial ideals in 
$R$ such that $\G(I)$ is the disjoint union of $\G(J)$ and $\G(K)$. If both $J$ and $K$ 
have linear resolutions, then $I=J+K$ is a Betti splitting.
\end{corollary}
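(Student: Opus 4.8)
The plan is to verify the numerical hypothesis of Theorem~\ref{t.bettisplit}: that $\beta_{i,j}(J \cap K) > 0$ forces $\beta_{i,j}(J) = \beta_{i,j}(K) = 0$. Write $d_J$ and $d_K$ for the degrees in which $J$ and $K$ have their linear resolutions, so that $\beta_{i,j}(J) = 0$ unless $j = i + d_J$ and $\beta_{i,j}(K) = 0$ unless $j = i + d_K$. Since a multigraded Betti number $\beta_{i,\mathbf{a}}(M)$ is bounded above by the ordinary graded Betti number $\beta_{i,|\mathbf{a}|}(M)$ in the corresponding total degree, it suffices to establish the implication for ordinary degrees $j$; I will in fact show the stronger statement that $\beta_{i,j}(J \cap K) > 0$ implies $j > i + d_J$ and $j > i + d_K$, which then yields the required vanishing in both the graded and multigraded settings and lets us apply Theorem~\ref{t.bettisplit}.

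The only substantive point is a lower bound on the degrees of the minimal generators of $J \cap K$. One checks directly that $J \cap K = (\lcm(u,v) \colon u \in \G(J),\ v \in \G(K))$, so every element of $\G(J \cap K)$ has the form $\lcm(u,v)$ for some $u \in \G(J)$, $v \in \G(K)$. Fix such $u$ and $v$. Because $\G(I)$ is the disjoint union of $\G(J)$ and $\G(K)$, both $u$ and $v$ are minimal generators of $I$; in particular $u \neq v$, and neither divides the other, since a minimal generator of $I$ cannot be divisible by another minimal generator of $I$. Hence $\lcm(u,v)$ is a proper multiple of each of $u$ and $v$, so $\deg \lcm(u,v) \geq \deg u + 1 = d_J + 1$ and likewise $\deg \lcm(u,v) \geq d_K + 1$. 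Thus every minimal generator of $J \cap K$ has degree at least $\max\{d_J, d_K\} + 1$.

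Now I invoke the standard fact that the $i$-th free module in the minimal free resolution of a module generated in degrees $\geq d_0$ is generated in degrees $\geq d_0 + i$ (the first syzygy submodule lies inside $\mf F_0$, and one inducts on homological degree). Applied to $J \cap K$ with $d_0 = \max\{d_J, d_K\} + 1$, this gives $\beta_{i,j}(J \cap K) = 0$ whenever $j < i + \max\{d_J, d_K\} + 1$. Consequently, if $\beta_{i,j}(J \cap K) > 0$ then $j \geq i + \max\{d_J, d_K\} + 1 > i + d_J$ and $j > i + d_K$, so $j \neq i + d_J$ and $j \neq i + d_K$, forcing $\beta_{i,j}(J) = \beta_{i,j}(K) = 0$. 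This is exactly the hypothesis of Theorem~\ref{t.bettisplit}, so $I = J+K$ is a Betti splitting.

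I do not expect a genuine obstacle here. The one place demanding care is the generator-degree estimate for $J \cap K$, where the hypothesis $\G(I) = \G(J) \sqcup \G(K)$ is precisely what rules out divisibility between a generator of $J$ and a generator of $K$; the remainder is the generic degree bound for minimal resolutions together with the reduction from multidegrees to total degrees. (If either $J$ or $K$ is the zero ideal or the unit ideal the statement is trivial, so one may harmlessly assume both are proper nonzero monomial ideals, whence $J \cap K$ is as well.)
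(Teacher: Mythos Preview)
Your proof is correct and follows essentially the same route as the paper: both arguments show that $J \cap K$ is generated in degrees strictly greater than $d = \max\{d_J,d_K\}$ (using that distinct minimal generators of $I$ cannot divide one another), observe that linearity of the resolutions of $J$ and $K$ confines their Betti numbers to degrees $j = i + d_J$ and $j = i + d_K$ respectively, and then invoke Theorem~\ref{t.bettisplit}. You spell out the degree bound for $\G(J \cap K)$ and the multidegree-to-total-degree reduction more carefully than the paper does, but the strategy is identical.
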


\begin{proof}
We may assume that the degree of any monomial in $\G(J)$ is $d_J$, and the degree of any 
monomial in $\G(K)$ is $d_K$. Since $\G(I)$ is the disjoint union of $\G(J)$ and $\G(K)$, 
$\G(J \cap K)$ is comprised of monomials of degree greater than $d=\max(d_J,d_K)$. Since 
$\reg(J) \le d$ and $\reg(K) \le d$, but $J \cap K$ is generated in degrees at least as 
high as $d+1$, we conclude that for all $i$ and all (multi)degrees $j$, $\beta_{i,j}(J \cap K) > 0$ 
implies that $\beta_{i,j}(J)=\beta_{i,j}(K)=0$. Thus by Theorem~\ref{t.bettisplit}, $I=J+K$ 
is a Betti splitting.
\end{proof}

Corollary~\ref{c.bothlinear} allows us some insight into the example in Eliahou and 
Kervaire's paper that motivated our work.

\begin{example} \label{e.ek}
Let $S=k[x_1,\dots,x_5]$, and let \[I=(x_1x_2x_3,x_1x_3x_5,x_1x_4x_5,x_2x_3x_4,x_2x_4x_5).\] 
Eliahou and Kervaire note in their paper that there exists no E-K splitting of $I$. This is 
relatively easy to check; for example, suppose $J=(x_1x_2x_3,x_1x_3x_5,x_1x_4x_5)$, and 
$K=(x_2x_3x_4,x_2x_4x_5)$. Then $J \cap K = (x_1x_2x_3x_4,x_1x_2x_4x_5)$. In order to map 
$\G(J \cap K)$ to $\G(J) \times \G(K)$, we have to send $x_1x_2x_3x_4$ to $(x_1x_2x_3,x_2x_3x_4)$, 
and $x_1x_2x_4x_5$ must map to $(x_1x_4x_5,x_2x_4x_5)$. (Here, these are elements of 
$\G(J) \times \G(K)$, not ideals.) But then the least common multiple of the first components 
is $\lcm(x_1x_2x_3,x_1x_4x_5)=x_1x_2x_3x_4x_5$, which does not strictly divide 
$\lcm(x_1x_2x_3x_4,x_1x_2x_4x_5)$.

However, $J$ and $K$ both have linear resolutions, and so by Corollary~\ref{c.bothlinear}, 
$I=J+K$ is a Betti splitting.
\end{example}

The partitioning of the generators in Example~\ref{e.ek} has a particularly convenient 
form that is useful for investigating monomial ideals in combinatorics.

\begin{definition} \label{d.onevar}
Let $I$ be a monomial ideal in $R=k[x_1,\dots,x_n]$. Let $J$ be the ideal generated by all 
elements of $\G(I)$ divisible by $x_i$, and let $K$ be the ideal generated by all other 
elements of $\G(I)$. We call $I=J+K$ an {\boldmath $x_i$}\textbf{-partition} of $I$. If $I=J+K$ 
is also a Betti splitting, we call $I=J+K$ an {\boldmath $x_i$}\textbf{-splitting}.
\end{definition}

\begin{corollary} \label{c.onevar}
Let $I=J+K$ be an $x_i$-partition of $I$ in which all elements of $J$ are divisible by $x_i$. 
If $\beta_{i,j}(J \cap K) > 0$ implies that $\beta_{i,j}(J)=0$ for all $i$ and multidegrees $j$, 
then $I=J+K$ is a Betti splitting. In particular, if the minimal graded free resolution of $J$ is 
linear, then $I=J+K$ is a Betti splitting. 
\end{corollary}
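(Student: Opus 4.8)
The plan is to deduce the result directly from Theorem~\ref{t.bettisplit}, whose hypothesis asks that $\beta_{p,j}(J\cap K)>0$ imply $\beta_{p,j}(J)=\beta_{p,j}(K)=0$ for every homological index $p$ and every multidegree $j$ (I will write $p$ for the homological index throughout, to keep it distinct from the subscript of the partition variable $x_i$). The vanishing of $\beta_{p,j}(J)$ is exactly the assumed hypothesis, so the entire content of the proof is to show that $\beta_{p,j}(J\cap K)>0$ forces $\beta_{p,j}(K)=0$, and then to check that a linear resolution of $J$ makes the assumed hypothesis automatic.

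The first key step is to isolate the elementary fact that makes this work: for any monomial ideal $L$, if $\beta_{p,j}(L)\neq 0$ then $x^j=\lcm(\sigma)$ for some subset $\sigma\subseteq\G(L)$. One clean way to see this is via the Taylor complex, which is a multigraded free resolution of $L$ whose free modules are concentrated in precisely the multidegrees $\lcm(\sigma)$; since the minimal free resolution of $L$ is obtained from it by cancelling trivial direct summands, its multidegrees form a subset of these (alternatively, this is immediate from the lcm-lattice description of the multigraded Betti numbers). Two consequences follow at once: if every element of $\G(L)$ is divisible by $x_i$, then $\beta_{p,j}(L)\neq 0$ implies the $i$-th coordinate of $j$ is $\geq 1$; and if no element of $\G(L)$ is divisible by $x_i$, then $\beta_{p,j}(L)\neq 0$ implies the $i$-th coordinate of $j$ is $0$.

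Now I would apply both halves of this observation. By hypothesis every element of $\G(J)$ is divisible by $x_i$, hence so is every element of $J$, and since $J\cap K\subseteq J$ every element of $\G(J\cap K)$ is divisible by $x_i$; thus $\beta_{p,j}(J\cap K)>0$ forces the $i$-th coordinate of $j$ to be positive. On the other hand, by the definition of an $x_i$-partition no element of $\G(K)$ is divisible by $x_i$, so $\beta_{p,j}(K)>0$ forces the $i$-th coordinate of $j$ to be $0$. These two conditions are incompatible, so $\beta_{p,j}(J\cap K)>0$ implies $\beta_{p,j}(K)=0$; combined with the hypothesis $\beta_{p,j}(J)=0$, the condition of Theorem~\ref{t.bettisplit} is verified and $I=J+K$ is a Betti splitting.

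For the final assertion I would check that a linear resolution of $J$ forces the assumed hypothesis, by the same degree count used in the proof of Corollary~\ref{c.bothlinear}: if $\G(J)$ sits in a single degree $d$, then for $m\in\G(J)$ and $n\in\G(K)$ we have $m\neq n$ and $n$ cannot properly divide $m$ (else $\G(I)$ would not be minimal), so $\deg\lcm(m,n)>d$; hence $J\cap K$ is generated in degrees $\geq d+1$, which forces $\beta_{p,j}(J\cap K)>0$ to imply $|j|\geq d+1+p$, whereas linearity of the resolution of $J$ gives $\beta_{p,j}(J)=0$ unless the total degree $|j|$ equals $d+p$. The two cannot hold simultaneously, so the first part applies. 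I do not expect a genuine obstacle here; the only points requiring care are stating and justifying the multidegree-support fact cleanly (via the Taylor complex or the lcm-lattice) and keeping the homological index notationally separate from the partition variable $x_i$.
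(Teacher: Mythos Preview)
Your proof is correct and follows essentially the same route as the paper: both reduce to Theorem~\ref{t.bettisplit} by observing that every multidegree supporting a Betti number of $J\cap K$ has positive $i$-th coordinate while every multidegree supporting a Betti number of $K$ has $i$-th coordinate zero, and then handle the linear case by a degree comparison. The only difference is that you explicitly justify the support statement via the Taylor complex, whereas the paper simply asserts it.
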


\begin{proof}
Note that all elements of both $J$ and $J \cap K$ are divisible by $x_i$, so all the multigraded 
Betti numbers of $J$ and $J \cap K$ occur in degrees divisible by $x_i$, and none of the 
multigraded Betti numbers of $K$ do. Therefore $\beta_{i,j}(J \cap K) > 0$ implies that 
$\beta_{i,j}(K)=0$ for all $i$ and multidegrees $j$; since the same implication holds for the 
multigraded Betti numbers of $J$ by hypothesis on the graded resolution of $J$, the first statement 
follows from Theorem~\ref{t.bettisplit}.

For the last statement, assume that $J$ has a linear resolution.  Then 
$J \cap K$ is generated in higher degrees than $J$, and therefore 
$\beta_{i,j}(J \cap K) > 0$ implies that $\beta_{i,j}(J)=0$ for all $i$ and multidegrees $j$.
\end{proof}

One class of ideals that is important in computational commutative algebra is that of 
stable ideals; the Borel-fixed ideals in characteristic zero are precisely the strongly 
stable ideals, a subclass. Eliahou and Kervaire point out using an E-K splitting argument 
that all stable ideals have an $x_1$-splitting, though using this is likely not more 
efficient for computing the Betti numbers of stable ideals than simply relying on the 
formulas from the standard Eliahou-Kervaire resolution. Unfortunately, our 
Theorem~\ref{t.bettisplit} does not prove that all stable ideals have an $x_1$-splitting 
because there could be $i$ and $j$ such that $\beta_{i,j}(J)$ and $\beta_{i,j}(J \cap K)$ are both nonzero. 
For example, if $I$ is the smallest Borel-fixed ideal in $S=k[x_1,\dots,x_6]$ with $x_1x_6^3$ 
and $x_3^2x_6$ as minimal generators (in Macaulay 2, one obtains this with the command 
{\tt borel monomialIdeal(x\_1*x\_6\verb!^!3,x\_3\verb!^!2*x\_6)}), let $I=J+K$ be an 
$x_1$-partition, and let $j$ correspond to the multidegree of $x_1x_2x_3x_4x_5x_6$. 
Then $\beta_{2,j}(J)$ and $\beta_{2,j}(J \cap K)$ are both nonzero. Thus the E-K splittings and 
our Betti splittings each apply to some ideals to which the other does not.

  
\section{Applications to edge ideals}  

We apply the results of the previous section to some combinatorial settings. We focus on 
ideals associated to graphs. Let $G = (V,E)$ be a simple graph (no loops 
or multiple edges) on the vertices $V = \{x_1,\ldots,x_n\}$ and edge set
$E$.  By identifying the variables 
of the polynomial ring $R = k[x_1,\ldots,x_n]$ with the vertices of $V$, we can associated 
to $G$ a square-free monomial ideal $I(G) = (\{x_ix_j ~| \{x_i,x_j\} \in E\}),$
called the {\bf edge ideal} of $G$. 

One natural way to try to split an edge ideal $I(G)$ is to seek an $x_i$-splitting. 
Following \cite{HVTsplit}, if $x_i$ is a vertex of $G$ that is not isolated and such that 
$G \setminus \{x_i\}$ is not a graph of isolated vertices, we call $x_i$ a \textbf{splitting vertex} 
of $G$. (Isolated vertices do not affect the Betti numbers of $I(G)$, and if 
$G \setminus \{x_i\}$ consists only of isolated vertices, the Betti numbers of $I(G)$ are 
easy to compute since $G$ is a complete bipartite graph plus possibly some isolated vertices.) 
Using Corollary~\ref{c.onevar}, we recover \cite[Theorem 4.2]{HVTsplit}, which was 
instrumental in \cite{HVTsplit} in unifying a number of previous works on resolutions of edge 
ideals, in one sentence.

\begin{corollary} \label{c.edgesplit} \cite[Theorem 4.2]{HVTsplit}
Let $G$ be a simple graph with edge ideal $I(G)$ and splitting vertex $x_i$. Let $J$ be the 
ideal generated by all elements of $\G(I)$ divisible by $x_i$, and $K$ be generated by 
$\G(I(G)) \setminus \G(J)$. Then $I(G)=J+K$ is an $x_i$-splitting. 
\end{corollary}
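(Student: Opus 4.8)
The plan is to invoke Corollary~\ref{c.onevar} with the $x_i$-partition $I(G)=J+K$, so the entire task reduces to showing that $J$ has a linear resolution. Since $x_i$ is a splitting vertex, it is not isolated, so $J$ is nonzero; moreover every generator of $J$ is divisible by $x_i$, so we may write $J = x_i \cdot L$, where $L$ is the ideal generated by $\{x_j : \{x_i,x_j\}\in E\}$, i.e. $L$ is generated by the variables corresponding to the neighbors of $x_i$. Multiplying all generators by the single variable $x_i$ is an isomorphism of graded modules up to a degree shift, so $J \cong L(-1)$ and it suffices to observe that $L$, being generated by a subset of the variables, has a linear (in fact Koszul) resolution.

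First I would spell out the $x_i$-partition: $J = (\{x_ix_j : \{x_i,x_j\}\in E\})$ and $K = I(G\setminus\{x_i\})$ together with the edges of $G$ not meeting $x_i$, which is exactly the edge ideal of the graph obtained by deleting $x_i$; the hypothesis that $G\setminus\{x_i\}$ is not a set of isolated vertices guarantees $K\neq 0$, and the hypothesis that $x_i$ is not isolated guarantees $J\neq 0$, so $\G(I(G))$ is genuinely the disjoint union $\G(J)\sqcup\G(K)$ with both pieces nonempty. Next I would note that every element of $J$ is divisible by $x_i$, which is precisely the running hypothesis of Corollary~\ref{c.onevar}. Then I would verify that $J$ has a linear resolution by the factorization $J = x_i L$ argument above: $L=(x_{j_1},\dots,x_{j_r})$ is generated by distinct variables, so its minimal free resolution is a Koszul complex and hence linear, generated in degree $1$; tensoring with the free rank-one module $R(-1)$ (i.e. multiplying generators by $x_i$) shifts this to a linear resolution of $J$ generated in degree $2$. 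With $J$ having a linear resolution, the last sentence of Corollary~\ref{c.onevar} applies directly and yields that $I(G)=J+K$ is a Betti splitting, hence an $x_i$-splitting in the terminology of Definition~\ref{d.onevar}.

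The only point requiring a little care — and the closest thing to an obstacle — is the claim that multiplying the generators of $L$ by $x_i$ really does produce a minimal free resolution of $J$, rather than merely some free resolution; this is immediate because $R$ is a domain, so multiplication by $x_i$ is injective and the shifted Koszul complex stays exact, and it stays minimal because all its maps still have entries in the maximal ideal. Equivalently, one can avoid the factorization entirely and simply observe that $J$ is generated in degree $2$ with $\reg(J)=2$: since $J\cong L(-1)$ and $\reg(L)=0$ (any ideal generated by variables is a complete intersection of linear forms), we get $\reg(J)=1+\reg(L)=2$, which is the definition of having a linear resolution for an ideal generated in degree $2$. Either way the argument is short, and the substantive content has already been packaged into Corollary~\ref{c.onevar}.
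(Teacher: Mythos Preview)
Your proof is correct and follows exactly the paper's approach: the paper's one-sentence proof simply observes that $J$ is $x_i$ times an ideal generated by a subset of the variables, hence has a linear resolution, and then invokes Corollary~\ref{c.onevar}. One small slip in your alternative regularity computation: for a nonzero ideal $L$ generated by variables one has $\reg(L)=1$ (it is $\reg(R/L)$ that equals $0$), so the shift gives $\reg(J)=\reg(L)+1=2$ as desired; your primary Koszul-shift argument is already correct as stated.
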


\begin{proof}
$J$ is $x_i$ times an ideal generated by a subset of the variables, so it has a linear 
resolution, and the result follows from Corollary~\ref{c.onevar}.
\end{proof}

\begin{remark} \label{r.hypergraphs}
One can generalize Corollary~\ref{c.edgesplit} to the setting of $d$-uniform properly-connected triangulated hypergraphs by using \cite[Theorem 6.8]{HVThyper} to prove that the ideal $J$, which consists of all hyperedges containing some fixed $x_i$, has a linear resolution. 
\end{remark}

Our second combinatorial application is a recursive formula for the graded Betti numbers 
of the cover ideal of a Cohen-Macaulay bipartite graph. We begin by introducing some 
terminology and Herzog and Hibi's classification of such graphs.

We call a graph $G$ a {\bf Cohen-Macaulay graph} if the ring $R/I(G)$ is Cohen-Macaulay. 
Identifying classes of Cohen-Macaulay graphs is a topic of much interest 
\cite{FH,FV,HH,V}. A graph-theoretic description of Cohen-Macaulay bipartite 
graphs was found by Herzog and Hibi \cite{HH}. We say a graph is {\bf bipartite} 
if there is a bipartition of $V = V_1 \cup V_2$ such that every edge of $G$ has one 
vertex in $V_1$ and the other in $V_2$.
Herzog and Hibi then proved:

\begin{theorem}\label{cmbipartite}
{\cite[Theorem 3.4]{HH}}  Let $G$ be a bipartite graph with bipartition 
$V = \{x_1,\ldots,x_n\} \cup \{y_1,\ldots,y_m\}$.  Then $G$ is Cohen-Macaulay if and only 
if $n=m$, and there is a labeling such that
\begin{enumerate}
\item[(a)] $\{x_i,y_i\} \in E$ for $i=1,\ldots,n$,
\item[(b)] whenever $\{x_i,y_j\} \in E$, then $i \leq j$, and 
\item[(c)] whenever $\{x_i,y_j\}$ and $\{x_j,y_k\}$ are edges of $G$ with $i < j < k$,
then $\{x_i,y_k\} \in E$.
\end{enumerate}
\end{theorem}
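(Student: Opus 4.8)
Since Herzog and Hibi's classification is quoted here, I will only sketch how one would recover it. First, I would pin down what conditions (a)--(c) really say: define a relation $\preceq$ on $\{1,\dots,n\}$ by setting $i\preceq j$ exactly when $\{x_i,y_j\}\in E$. Condition (a) makes $\preceq$ reflexive, condition (b) forces $i\preceq j\Rightarrow i\le j$, so $\preceq$ is antisymmetric and refined by the natural order, and (c) together with (b) makes $\preceq$ transitive. Hence (a)--(c) hold for some labeling precisely when $G$ is the bipartite graph $G(P)$ of a finite poset $P$ on $n$ elements --- with edge set $\{\{x_i,y_j\} : i\preceq_P j\}$ --- the labeling being a linear extension of $P$. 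With this translation in hand, the plan is to prove the two implications separately.

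For the forward direction, assume $R/I(G)$ is Cohen--Macaulay; I would first discard isolated vertices. Its Stanley--Reisner complex, the independence complex $\Delta_G$, is then pure, i.e.\ $G$ is unmixed, so all minimal vertex covers of $G$ have the same size. Combining this with K\"onig's theorem (in a bipartite graph the minimum vertex cover and a maximum matching have the same size), I would run the standard analysis of unmixed bipartite graphs in the style of Villarreal to get $n=m$, a perfect matching which after relabeling is $\{x_i,y_i\}_i$ (this is (a)), and acyclicity of the digraph with an arc $i\to j$ whenever $\{x_i,y_j\}\in E$, so that a topological sort produces a labeling with $\{x_i,y_j\}\in E\Rightarrow i\le j$ (this is (b)). To extract (c) --- the step that genuinely uses Cohen--Macaulayness rather than just unmixedness --- I would argue by contradiction: given $\{x_i,y_j\},\{x_j,y_k\}\in E$ with $i<j<k$ but $\{x_i,y_k\}\notin E$, pass to the link in $\Delta_G$ of a carefully chosen independent set $\sigma$, equivalently to the induced subgraph $G[V\setminus N[\sigma]]$ obtained by deleting $\sigma$ together with all its neighbors, chosen so that this subgraph keeps the obstructing configuration around $x_i,y_j,x_j,y_k$ but is now disconnected or is a graph whose edge ideal is known not to be Cohen--Macaulay. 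Since links of a Cohen--Macaulay complex are Cohen--Macaulay, this contradiction forces (c).

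For the converse, I would write $G=G(P)$ and show that $\Delta_{G(P)}$ is vertex decomposable --- hence shellable, hence Cohen--Macaulay over any field --- by induction on $|P|$. The combinatorial input is that the minimal vertex covers of $G(P)$ are exactly the sets $\{x_i : i\notin F\}\cup\{y_j : j\in F\}$ as $F$ ranges over the order filters of $P$; in particular each has size $n$, so $\Delta_{G(P)}$ is pure of dimension $n-1$. Taking a maximal element $t$ of $P$ and $v=y_t$ as a shedding vertex, one checks that $\Delta_{G(P)}\setminus v$ and $\operatorname{link}_{\Delta_{G(P)}}(v)$ are, after deleting cone vertices, the independence complexes of $G(P')$ for the smaller posets $P'$ obtained from $P$ by removing $t$, respectively by removing $t$ together with everything below it; both are vertex decomposable by induction, and the shedding condition follows from purity. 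Alternatively I could invoke Eagon and Reiner: $R/I(G)$ is Cohen--Macaulay if and only if the cover ideal $I(G)^{\vee}$ has a linear resolution, and for $G=G(P)$ one shows directly that this cover ideal has linear quotients once its generators, indexed by the order filters of $P$, are listed in a suitable order coming from the distributive lattice of those filters.

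The step I expect to be the real obstacle is the derivation of condition (c). Turning the purely local failure of transitivity of $\preceq$ into an honest homological obstruction hinges on choosing exactly the right face to localize at (or set of variables to invert) so that the resulting induced subgraph provably fails Cohen--Macaulayness; by contrast, the structure giving (a) and (b) is matching-theoretic bookkeeping, and the converse is a clean induction once the shedding vertex is identified.
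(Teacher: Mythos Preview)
The paper does not prove this theorem: it is quoted verbatim from Herzog and Hibi \cite{HH} and used as a black box in the subsequent lemmas and in Theorem~\ref{rescmbipartite}. There is therefore no ``paper's own proof'' to compare against.

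That said, your sketch is a faithful outline of how one actually establishes the Herzog--Hibi classification. The translation of (a)--(c) into the statement that $G$ is the comparability bipartite graph of a finite poset is exactly the right framing. For the forward direction, your plan---unmixedness plus K\"onig to get the perfect matching and the acyclic orientation (conditions (a) and (b)), then a localization/link argument for (c)---is the standard route; Herzog and Hibi in fact extract (c) by an algebraic localization rather than a purely simplicial link argument, but the content is the same. For the converse, Herzog and Hibi go through the Eagon--Reiner alternative you mention: they show the cover ideal $I(G)^{\vee}$ has linear quotients by ordering the generators according to the distributive lattice $\mathcal{J}(P)$ of order filters. Your vertex-decomposability induction on a maximal element of $P$ is a legitimate and arguably cleaner alternative that yields the stronger conclusion of vertex decomposability (hence shellability) of the independence complex; this strengthening was later made explicit by Van Tuyl and others.

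Your self-assessment is accurate: the only step with real content is (c), and your plan there is a bit vague---``a carefully chosen independent set $\sigma$'' and ``known not to be Cohen--Macaulay'' would need to be made precise. One clean way is to localize at the prime corresponding to the complement of a maximal independent set that contains $y_i$, $x_j$, and $y_k$ but avoids $x_i$ and $x_k$, and check that the resulting one-dimensional ring fails Cohen--Macaulayness; alternatively, reduce to showing the induced subgraph on $\{x_i,y_i,x_j,y_j,x_k,y_k\}$ (after deleting appropriate neighbors) is a path, whose independence complex is not Cohen--Macaulay of the required dimension.
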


The following observations about Cohen-Macaulay bipartite graphs shall be useful. 
First, by Theorem \ref{cmbipartite}, the vertex $x_n$ must have degree one and is 
only adjacent to $y_n$. Second, use the notation $N(y_n)$ to
denote the {\bf neighbors} of $y_n$; that is, \[N(y_n) = \{z \in V ~|~ \{z,y_n\} \in E\}.\] 
Because $G$ is bipartite, $N(y_n) = \{x_{i_1},\ldots,x_{i_s},x_n\}$ for some $x_{i_j} \in \{x_1,\ldots,x_n\}$.

\begin{lemma}  \label{cmsubgraphs}
Let $G$ be a Cohen-Macaulay bipartite graph.  Then
\begin{enumerate}
\item[(a)] $G \setminus \{y_n,x_n\}$ is a Cohen-Macaulay bipartite graph.
\item[(b)] If $N(y_n) =  \{x_{i_1},\ldots,x_{i_s},x_n\}$, then 
$G \setminus \{x_{i_1},y_{i_1},\ldots,x_{i_s},y_{i_s},x_n,y_n\}$ is a Cohen-Macaulay bipartite graph.
\end{enumerate}
\end{lemma}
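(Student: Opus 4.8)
The plan is to verify that each of the two subgraphs satisfies the Herzog--Hibi criteria of Theorem~\ref{cmbipartite}, using the given labeling of $G$ inherited by the induced subgraph (after relabeling to close up gaps in the indices). For part (a), deleting $x_n$ and $y_n$ leaves the bipartite graph $G' = G \setminus \{x_n,y_n\}$ on vertex sets $\{x_1,\dots,x_{n-1}\}$ and $\{y_1,\dots,y_{n-1}\}$. Conditions (a), (b), and (c) of Theorem~\ref{cmbipartite} are all inherited directly: the ``perfect matching'' edges $\{x_i,y_i\}$ for $i \le n-1$ survive, the implication $\{x_i,y_j\}\in E \Rightarrow i \le j$ is a statement about surviving edges, and the transitivity condition (c) only references indices $i<j<k$, all of which are $\le n-1$. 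So essentially no work is needed beyond observing that induced subgraphs on an initial segment of the labels preserve the three conditions, and that $n-1 = n-1$ keeps the balanced-bipartition requirement.

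For part (b) the situation requires more care because we delete vertices from the \emph{middle} of the label sequence. Write $A = \{i_1,\dots,i_s\} \cup \{n\}$ for the index set being removed on the $x$-side; crucially, since $N(y_n)$ records neighbors of $y_n$ and condition (b) forces any $x_{i_\ell}$ adjacent to $y_n$ to satisfy $i_\ell \le n$, the removed set is $\{x_i : i \in A\} \cup \{y_i : i \in A\}$ --- we remove $x_i$ and $y_i$ together for each $i \in A$. Thus the resulting graph $G''$ is again balanced bipartite on $\{x_i : i \notin A\}$ and $\{y_i : i \notin A\}$, and after the order-preserving relabeling $i \mapsto$ (its rank in $\{1,\dots,n\}\setminus A$) the perfect-matching edges $\{x_i,y_i\}$ and the monotonicity condition (b) are again inherited verbatim. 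The transitivity condition (c) is also inherited, since if $\{x_i,y_j\}$ and $\{x_j,y_k\}$ are edges of $G''$ with $i<j<k$ then they are edges of $G$ with the same inequalities, so $\{x_i,y_k\} \in E$, and $i,k \notin A$ means this edge survives in $G''$.

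The main obstacle --- the only place the hypothesis $N(y_n) = \{x_{i_1},\dots,x_{i_s},x_n\}$ is genuinely used --- is checking that $G''$ really is an induced subgraph on paired indices, i.e., that no index $i_\ell$ exceeds $n$ so that the deletion is symmetric between the $x$-side and $y$-side. Here I would invoke condition (b): an edge $\{x_{i_\ell}, y_n\}$ forces $i_\ell \le n$, so $A \subseteq \{1,\dots,n\}$ and the deletion of $\{x_i : i\in A\}\cup\{y_i : i\in A\}$ makes sense and leaves a balanced bipartition of size $n - |A|$. Once that symmetry is established, the three Herzog--Hibi conditions transfer with no computation, exactly as in part (a). I would also remark in passing that $N(y_n)$ being a clique-like ``neighborhood'' plays no further role in this lemma --- it is recorded here only because the particular vertex set $G \setminus \{x_{i_1},y_{i_1},\dots,x_{i_s},y_{i_s},x_n,y_n\}$ is the one that will arise in the recursive Betti-number computation to follow, where one splits the cover ideal according to whether a generator involves $y_n$.
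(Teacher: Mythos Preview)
Your proof is correct, and both you and the paper ultimately just verify the three Herzog--Hibi conditions of Theorem~\ref{cmbipartite} on the induced subgraph with the inherited labeling. For part~(a) there is no difference at all.

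For part~(b) there is one substantive difference worth flagging. The paper takes a short detour before checking the conditions: it first proves that after deleting $y_n$ and $N(y_n)=\{x_{i_1},\ldots,x_{i_s},x_n\}$ from $G$, the vertices $y_{i_1},\ldots,y_{i_s}$ become \emph{isolated}. The argument uses condition~(c) in an essential way: if some edge $\{x_k,y_{i_j}\}$ survived, then $k<i_j$ by condition~(b), and combining $\{x_k,y_{i_j}\}$ with $\{x_{i_j},y_n\}$ forces $\{x_k,y_n\}\in E$ by~(c), so $x_k\in N(y_n)$, a contradiction. This isolated-vertex statement is not needed to prove the lemma as stated---your direct verification is perfectly sufficient for that---but the paper records it here because it is used later (Remark~\ref{isolatedvertex} and the proof of Theorem~\ref{rescmbipartite}) to identify $I(G\setminus\{y_n\cup N(y_n)\})^{\vee}$ with $I(G'')^{\vee}$. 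So your route is cleaner for the lemma itself, while the paper's route does double duty for what follows.

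One small expository quibble: what you call ``the main obstacle''---that each $i_\ell\le n$---is immediate from condition~(b) of Theorem~\ref{cmbipartite} (and in any case the deletion set is given in the statement, so the bipartition is balanced by construction). There is no real obstacle in your argument; the Herzog--Hibi conditions transfer to the index subset $\{1,\ldots,n\}\setminus A$ exactly as you describe.
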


\begin{proof}  For (a), by $G \setminus \{y_n,x_n\}$ we mean the graph with vertices 
$x_n$, $y_n$, and all the edges adjacent to these vertices removed. Note that this is 
the same graph as $G \setminus \{y_n\}$, except this second graph has an isolated vertex, 
namely $x_n$.  It is straightforward to check that the conditions of Theorem \ref{cmbipartite} 
still hold for $G \setminus \{y_n,x_n\}$.

For (b), first note that when we remove $y_n$ and its neighbors $N(y_n)$ from $G$, the 
vertices $\{y_{i_1},\ldots,y_{i_s}\}$ must all be isolated vertices in the graph $G \setminus 
\{y_n \cup N(y_n)\}.$  Indeed, suppose that there is an edge in $G \setminus \{y_n \cup N(y_n)\}$ 
that contains $y_{i_j} \in  \{y_{i_1},\ldots,y_{i_s}\}$. Because $G$ is bipartite (and thus, so is 
$G \setminus \{y_n \cup N(y_n)\}$), 
this edge must have form $\{x_k,y_{i_j}\}$.  But by Theorem \ref{cmbipartite}, we must have 
$k \leq i_j$.  However, we cannot have $k = i_j$ since $x_{i_j}$ has been removed. But then 
in $G$ we have edges $\{x_k,y_{i_j}\}$ and $\{x_{i_j},y_n\}$, and hence, by Theorem 
\ref{cmbipartite}, the edge $\{x_k,y_n\}$ is also in $G$.   Hence,
$x_k \in N(y_n)$, contradicting the fact that $x_k \in G \setminus \{y_n \cup N(y_n)\}$. 
So, removing the isolated vertices of $G \setminus \{y_n \cup N(y_n)\}$ gives us the graph 
$G \setminus \{x_{i_1},y_{i_1},\ldots,x_{i_s},y_{i_s},x_n,y_n\}$.  Again, one can check that the 
conditions of Theorem \ref{cmbipartite} hold for this graph.
\end{proof}

A subset $W \subseteq V$ is called a {\bf vertex cover} if every edge $e = \{u,v\} \in E$ 
has non-empty intersection with $W$.  We call $W$ a {\bf minimal vertex cover} if $W$ is a 
vertex cover, but no proper subset of $W$ is a vertex cover.  Attached to $G$ is another 
square-free monomial ideal, called the {\bf cover ideal}, defined by
\[I(G)^{\vee} = (\{ x_{i_1}\cdots x_{i_s} ~|~ W = \{x_{i_1},\ldots,x_{i_s}\} ~~\mbox{is a minimal 
vertex cover of $G$}\}).\] Note that the cover ideal is  the Alexander dual of the edge 
ideal $I(G)$; this explains our use of the notation $I(G)^{\vee}$. To compute even the $0$-th Betti number of $I(G)^{\vee}$ from $G$ itself is difficult since it requires knowing how many minimal vertex covers $G$ has.

\begin{remark}  \label{isolatedvertex}
If $z$ is an isolated vertex of $G$, then the cover ideals of $G$ and $G \setminus \{z\}$ 
are exactly the same, assuming we consider both as ideals of the (larger) ring inside which 
$I(G)^{\vee}$ lives. The proof of Lemma \ref{cmsubgraphs} then implies that 
$I(G\setminus \{y_n\})^{\vee} = I(G\setminus \{y_n,x_n\})^{\vee}$,
and $I(G\setminus \{y_n \cup N(y_n)\})^{\vee} = 
I(G\setminus \{x_{i_1},y_{i_1},\ldots,x_{i_s},y_{i_s},x_n,y_n\})^{\vee}$. 
\end{remark}

\begin{lemma} \label{presplit}
Let $G$ be a Cohen-Macaulay bipartite graph.  Let $y_n$ be the unique vertex adjacent to
$x_n$, and suppose that $N(y_n) = \{x_{i_1},\ldots,x_{i_s},x_n\}$.  Then
\[I(G)^{\vee} = y_nI(G\setminus\{y_n\})^{\vee} + x_{i_1}\cdots x_{i_s}x_nI(G\setminus\{y_n \cup N(y_n)\})^{\vee}.\]
\end{lemma}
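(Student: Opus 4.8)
The plan is to verify directly that a monomial $m$ lies in $I(G)^\vee$ if and only if it lies in the ideal on the right-hand side, working at the level of minimal vertex covers. First I would record the basic dictionary: the minimal generators of $I(G)^\vee$ correspond to minimal vertex covers $W$ of $G$, and I would split such covers according to whether $y_n \in W$ or not. If $y_n \notin W$, then since $\{x_n,y_n\}\in E$ we must have $x_n \in W$, and also every neighbor $x_{i_j}$ of $y_n$ must be in $W$ because the edge $\{x_{i_j},y_n\}$ has to be covered; I would argue that $W \setminus \{x_{i_1},\dots,x_{i_s},x_n\}$ is then a minimal vertex cover of $G \setminus \{y_n \cup N(y_n)\}$ (using that $y_{i_1},\dots,y_{i_s}$ become isolated, as established in the proof of Lemma~\ref{cmsubgraphs}, so they are not needed in the cover), giving the generator $x_{i_1}\cdots x_{i_s}x_n \cdot (\text{monomial of } I(G\setminus\{y_n\cup N(y_n)\})^\vee)$. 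If instead $y_n \in W$, I would show $W \setminus \{y_n\}$ is a minimal vertex cover of $G \setminus \{y_n\}$ — here minimality uses that $x_n$ has degree one in $G$, so dropping $x_n$ is never forced and removing the edge at $y_n$ does not destroy minimality elsewhere — yielding a generator of $y_n I(G\setminus\{y_n\})^\vee$.

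For the reverse containment I would take a minimal vertex cover $W'$ of $G\setminus\{y_n\}$ and check that $W' \cup \{y_n\}$ is a vertex cover of $G$ (it covers all edges at $y_n$ and all other edges since those survive in $G\setminus\{y_n\}$), so $y_n I(G\setminus\{y_n\})^\vee \subseteq I(G)^\vee$; similarly, for a minimal vertex cover $W''$ of $G\setminus\{y_n\cup N(y_n)\}$, I would verify that $W'' \cup \{x_{i_1},\dots,x_{i_s},x_n\}$ covers $G$ — every edge of $G$ not already covered by $W''$ must meet $\{y_n\}\cup N(y_n)$, and the edges at $y_n$ are covered by $N(y_n)\ni x_n$ while an edge $\{x_{i_j}, y_k\}$ is covered by $x_{i_j}$. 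This shows both summands on the right lie in $I(G)^\vee$, and combined with the forward direction (every generator of $I(G)^\vee$ is a multiple of a generator of one of the two summands) we get equality of ideals. I should be slightly careful that the monomials produced are genuinely in the cover ideals and not just in some power of the maximal ideal times them, but since minimal vertex covers are preserved under the operations above (by Lemma~\ref{cmsubgraphs} and Remark~\ref{isolatedvertex}), this is fine.

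The main obstacle I anticipate is the bookkeeping around minimality of vertex covers under deletion — specifically, confirming that passing from a minimal vertex cover of $G$ with $y_n\notin W$ to $W\setminus(N(y_n)\cup\{x_n\})$ lands on a \emph{minimal} cover of the smaller graph, and conversely that adjoining $N(y_n)\cup\{x_n\}$ to a minimal cover downstairs and then taking minimal covers refines correctly. The cleanest way to sidestep the minimality subtleties is to phrase the argument purely as an equality of monomial ideals (containment both ways of the generators, up to divisibility) rather than tracking a bijection of minimal covers; then one only needs that each side's generators are divisible by, or equal to, generators of the other side, which follows from the covering arguments above together with the structural facts in Lemma~\ref{cmsubgraphs}. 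I would also invoke Remark~\ref{isolatedvertex} to pass freely between $I(G\setminus\{y_n\})^\vee$ and $I(G\setminus\{y_n,x_n\})^\vee$, and between $I(G\setminus\{y_n\cup N(y_n)\})^\vee$ and the cover ideal of the graph with the isolated $y_{i_j}$ removed, so that the recursion connects to honest Cohen-Macaulay bipartite graphs as in Lemma~\ref{cmsubgraphs}.
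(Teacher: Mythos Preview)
Your proposal is correct and follows essentially the same approach as the paper: split minimal vertex covers of $G$ according to whether or not they contain $y_n$, and verify both containments at the level of ideals. The paper's proof is slightly more streamlined precisely because it adopts from the start the viewpoint you arrive at in your second paragraph---it never tracks minimality on the smaller graphs, only that $m/y_n$ (resp.\ $m/(x_{i_1}\cdots x_{i_s}x_n)$) lies in the relevant cover ideal, and conversely that generators of the right-hand side correspond to (not necessarily minimal) vertex covers of $G$; so your worry about preserving minimality under deletion is real but, as you correctly note, unnecessary for the argument.
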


\begin{proof}  In order to cover the edge $\{x_n,y_n\}$, every vertex cover must contain
at least one of $x_n$ and $y_n$.  In fact, any minimal vertex cover of $G$ must contain exactly one 
of $x_n$ or $y_n$; if $W$ is any vertex cover that contains both $x_n$ and $y_n$, then 
$W \setminus \{x_n\}$ remains a vertex cover of $G$. Thus, if $m$ is a minimal generator of 
$I(G)^{\vee}$, it is divisible by exactly one of $x_n$ and $y_n$.  If $y_n|m$, then $\frac{m}{y_n}$ 
must correspond to a cover of $G \setminus \{y_n\}$, and hence, 
$\frac{m}{y_n} \in I(G\setminus \{y_n\})^{\vee}$. If $x_n|m$, then $y_n \nmid m$, so $x_{i_1},\ldots,x_{i_s}$ 
must also divide $m$ so that
all edges adjacent to $y_n$ are covered.  It then follows that $\frac{m}{x_{i_1}\cdots x_{i_s}x_n}
\in I(G \setminus \{y_n \cup N(y_n)\})^{\vee}$.

Conversely, it is easy to see that minimal generators of $y_nI(G\setminus\{y_n\})^{\vee}$
and $x_{i_1}\cdots x_{i_s}x_nI(G\setminus\{y_n \cup N(y_n)\})^{\vee}$ correspond to vertex covers
 of $G$.
\end{proof}

We need one more result, a theorem due to Eagon and Reiner:

\begin{theorem}\cite[Theorem 3]{ER}  \label{eagonreiner} 
Let $I$ be a square-free monomial ideal.  Then $R/I$ is Cohen-Macaulay if and only if the Alexander 
dual $I^{\vee}$ has a linear resolution.
\end{theorem}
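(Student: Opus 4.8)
\emph{Sketch of a proof.}
The plan is to pass to the Stanley--Reisner dictionary and then compare Betti tables through Alexander duality. Write $I = I_\Delta$ for the Stanley--Reisner ideal of a simplicial complex $\Delta$ on the vertex set $[n] = \{1,\dots,n\}$, so that $I^\vee = I_{\Delta^\vee}$ is the Stanley--Reisner ideal of the Alexander dual complex $\Delta^\vee = \{F \subseteq [n] : [n]\setminus F \notin \Delta\}$, whose minimal monomial generators are precisely the monomials $x_{[n]\setminus F}$ as $F$ ranges over the facets of $\Delta$. The external inputs I would invoke are Hochster's formula $\beta_{i,j}(R/I_\Delta) = \sum_{|W|=j}\dim_k \widetilde{H}_{j-i-1}(\Delta|_W;k)$; combinatorial Alexander duality in the form $\widetilde{H}_{\ell}(\Delta|_W;k) \cong \widetilde{H}^{|W|-\ell-3}((\Delta^\vee)|_W;k)$, which uses the identity $(\Delta|_W)^\vee = (\Delta^\vee)|_W$ computed inside the ground set $W$; and the Auslander--Buchsbaum formula $\operatorname{depth}(R/I) = n - \operatorname{pd}(R/I)$.

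The engine is Terai's identity $\operatorname{pd}(R/I_\Delta) = \reg(I_{\Delta^\vee})$. To obtain it I would substitute the duality isomorphism into Hochster's formula for $k[\Delta]$ and then re-read the result through Hochster's formula applied to $k[\Delta^\vee]$; tracking the homological and internal degrees gives $\beta_{i,j}(R/I_\Delta) \neq 0 \iff \beta_{j-i,\,j}(I_{\Delta^\vee}) \neq 0$, and the regularity contribution $j - (j-i) = i$ of the nonzero Betti number on the right matches the homological index $i$ on the left. Taking the maximum over all $i$ and $j$ yields $\reg(I_{\Delta^\vee}) = \max\{ i : \beta_{i,j}(R/I_\Delta) \neq 0\} = \operatorname{pd}(R/I_\Delta)$. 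This index bookkeeping is the one genuinely delicate step; the original argument of Eagon and Reiner instead runs the equivalent comparison directly against Reisner's criterion, but the homological content is the same.

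With Terai's identity in hand the equivalence is immediate. If $R/I_\Delta$ is Cohen--Macaulay then it is unmixed, so $\Delta$ is pure, say of dimension $d-1$; hence $I_{\Delta^\vee}$ is generated in the single degree $n-d$ and $\dim R/I_\Delta = d$. Cohen--Macaulayness and Auslander--Buchsbaum give $\operatorname{pd}(R/I_\Delta) = n-d$, so $\reg(I_{\Delta^\vee}) = n-d$ by Terai; an ideal generated in a single degree $q$ whose regularity equals $q$ has a $q$-linear resolution, so $I^\vee = I_{\Delta^\vee}$ has a linear resolution. Conversely, if $I^\vee = I_{\Delta^\vee}$ has a linear resolution then it is generated in one degree $q$, forcing every facet of $\Delta$ to have $n-q$ vertices; thus $\Delta$ is pure and $\dim R/I_\Delta = n-q$, while $\reg(I_{\Delta^\vee}) = q$ gives $\operatorname{pd}(R/I_\Delta) = q$ by Terai, so $\operatorname{depth}(R/I_\Delta) = n-q = \dim R/I_\Delta$ by Auslander--Buchsbaum and $R/I_\Delta$ is Cohen--Macaulay. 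As usual in Stanley--Reisner theory both homological inputs are sensitive to $\operatorname{char} k$, so the statement should be read as ``Cohen--Macaulay over $k$''.
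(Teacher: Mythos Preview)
The paper does not prove this theorem at all; it is quoted as \cite[Theorem 3]{ER} and used as a black box in the proof of Theorem~\ref{rescmbipartite}. So there is no ``paper's own proof'' to compare against, and your sketch is extra content rather than a reconstruction.

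That said, your overall strategy is the standard modern one: derive Terai's identity $\operatorname{pd}(R/I_\Delta)=\reg(I_{\Delta^\vee})$ from Hochster's formula together with combinatorial Alexander duality, and then finish with Auslander--Buchsbaum. The conclusion paragraphs (both directions of the equivalence) are correct once Terai's identity is in hand.

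There is, however, a genuine slip in the derivation of Terai's identity. The claimed equality $(\Delta|_W)^\vee = (\Delta^\vee)|_W$ is false in general; restriction and Alexander duality do not commute. What is true is that the Alexander dual of $\Delta|_W$ \emph{taken inside the ground set $W$} equals the link $\operatorname{link}_{\Delta^\vee}([n]\setminus W)$, not the restriction $(\Delta^\vee)|_W$. Consequently the duality isomorphism you need is
\[
\widetilde H_{|W|-i-1}(\Delta|_W;k)\;\cong\;\widetilde H_{i-2}\bigl(\operatorname{link}_{\Delta^\vee}([n]\setminus W);k\bigr),
\]
and the matching Hochster-type formula on the dual side is the Eagon--Reiner form $\beta_{i,\sigma}(I_{\Delta^\vee})=\dim_k\widetilde H_{i-1}\bigl(\operatorname{link}_{\Delta}([n]\setminus\sigma);k\bigr)$, which involves links rather than restrictions. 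Once you make this correction the index bookkeeping goes through (the multidegree gets complemented, not preserved), and you still obtain $\operatorname{pd}(R/I_\Delta)=\reg(I_{\Delta^\vee})$. Your hedge that ``this index bookkeeping is the one genuinely delicate step'' is well placed; as written, that step is not yet right.
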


Because we are interested in the resolution of $I(G)^{\vee}$ when $G$ is a Cohen-Macaulay
bipartite graph, Theorem \ref{eagonreiner} implies that $I(G)^{\vee}$ has a linear resolution.
Now $I(G)^{\vee}$ is generated by monomials of degree $n$.  So, $\beta_{i,j}(I(G)^{\vee}) = 0$
for all $j \neq n+i$.  In this case, the $i$-th total Betti number of $I(G)$ equals the 
$\beta_{i,n+i}(I(G)^{\vee})$. Thus it suffices to find the $i$-th total Betti numbers. As we 
show below, these can be computed recursively.  The formula is based upon the fact that 
we can find a Betti splitting of the monomial ideal $I(G)^{\vee}$.

\begin{theorem} \label{rescmbipartite} Let $G$ be a Cohen-Macaulay bipartite graph.  Suppose that 
$y_n$ is the unique vertex adjacent to $x_n$ and that $N(y_n) = \{x_{i_1},\ldots,x_{i_s},x_n\}$.
Then, \[\beta_i(I(G)^{\vee}) = \beta_i(I(G')^{\vee}) + \beta_i(I(G'')^{\vee}) + \beta_{i-1}(I(G'')^{\vee})
~~\mbox{for all $i \geq 0$}\] where $G' = G \setminus \{x_n,y_n\}$ and 
$G'' = G \setminus  \{x_{i_1},y_{i_1},\ldots,x_{i_s},y_{i_s},x_n,y_n\}$, both of which are Cohen-Macaulay bipartite graphs.
\end{theorem}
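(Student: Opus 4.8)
The plan is to apply the Betti splitting machinery of Section 2 to the decomposition of $I(G)^\vee$ furnished by Lemma~\ref{presplit}. Write $J = y_n I(G\setminus\{y_n\})^\vee$ and $K = x_{i_1}\cdots x_{i_s}x_n I(G\setminus\{y_n\cup N(y_n)\})^\vee$, so that Lemma~\ref{presplit} gives $I(G)^\vee = J+K$, and one checks that $\G(I(G)^\vee)$ is the disjoint union of $\G(J)$ and $\G(K)$ (the generators of $J$ are exactly those divisible by $y_n$, those of $K$ are exactly those divisible by $x_n$, and no minimal generator is divisible by both, as in the proof of Lemma~\ref{presplit}). Thus $I(G)^\vee = J+K$ is in fact a $y_n$-partition in the sense of Definition~\ref{d.onevar}.

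The key point is that $J$ has a linear resolution, so Corollary~\ref{c.onevar} applies directly and shows $I(G)^\vee = J+K$ is a Betti splitting. To see $J$ has a linear resolution, note $J = y_n\cdot I(G\setminus\{y_n\})^\vee$; by Remark~\ref{isolatedvertex}, $I(G\setminus\{y_n\})^\vee = I(G')^\vee$ where $G' = G\setminus\{x_n,y_n\}$, which is Cohen-Macaulay bipartite by Lemma~\ref{cmsubgraphs}(a), and hence by the Eagon--Reiner theorem (Theorem~\ref{eagonreiner}) $I(G')^\vee$ has a linear resolution. Multiplying by the single variable $y_n$ shifts the resolution but preserves linearity, so $J$ has a linear resolution. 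Therefore by Corollary~\ref{c.onevar},
\[\beta_{i,j}(I(G)^\vee) = \beta_{i,j}(J) + \beta_{i,j}(K) + \beta_{i-1,j}(J\cap K)\quad\text{for all }i,j.\]

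It remains to identify $\beta_i(J)$, $\beta_i(K)$, and $\beta_{i-1}(J\cap K)$ with the terms in the claimed formula. Since $J = y_n I(G')^\vee$, multiplication by the monomial $y_n$ induces an isomorphism of minimal free resolutions up to degree shift, so $\beta_i(J) = \beta_i(I(G')^\vee)$ for all $i$. Similarly $K = x_{i_1}\cdots x_{i_s}x_n I(G'')^\vee$ with $G'' = G\setminus\{x_{i_1},y_{i_1},\ldots,x_{i_s},y_{i_s},x_n,y_n\}$ (using Remark~\ref{isolatedvertex} to pass from $G\setminus\{y_n\cup N(y_n)\}$ to $G''$), so $\beta_i(K) = \beta_i(I(G'')^\vee)$. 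For the intersection term I would compute $J\cap K$ explicitly: an element lies in both $J$ and $K$ iff it is divisible by $y_n$ and by $x_{i_1}\cdots x_{i_s}x_n$ and, modulo those factors, corresponds to a common vertex cover; one argues that $J\cap K = y_n x_{i_1}\cdots x_{i_s}x_n I(G'')^\vee$ (the extra variables $y_n$ and the $x_{i_j}$'s and $x_n$ are already forced, and removing $y_n$ and $N(y_n)$ already removes the $y_{i_j}$ as isolated vertices by Lemma~\ref{cmsubgraphs}(b)). Hence $J\cap K$ is again $I(G'')^\vee$ up to multiplication by a monomial, giving $\beta_{i-1}(J\cap K) = \beta_{i-1}(I(G'')^\vee)$, and summing over $j$ yields the total Betti number formula. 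The main obstacle is the explicit computation of $J\cap K$: one must verify carefully that the least common multiple of a generator of $J$ and a generator of $K$ always factors as $y_n x_{i_1}\cdots x_{i_s}x_n$ times a generator of $I(G'')^\vee$, which uses both parts of Lemma~\ref{cmsubgraphs} together with the combinatorial description of covers; everything else is the routine observation that tensoring a resolution with a rank-one free module shifts but does not change Betti numbers.
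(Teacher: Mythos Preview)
Your proposal is correct and follows essentially the same route as the paper: establish the $y_n$-partition via Lemma~\ref{presplit}, invoke Eagon--Reiner on $G'$ to get linearity of $J$, apply Corollary~\ref{c.onevar} for the Betti splitting, and then compute $J\cap K = y_n x_{i_1}\cdots x_{i_s}x_n I(G'')^\vee$ to finish. The paper carries out that last intersection computation explicitly via two containments (showing in particular that $x_{i_1}\cdots x_{i_s}m'$ covers $G'$ for any generator $m'$ of $I(G'')^\vee$), which is exactly the verification you flag as the main obstacle.
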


\begin{proof}
By Lemma \ref{presplit},
\begin{equation}\label{splitcover} 
I(G)^{\vee} = y_nI(G\setminus\{y_n\})^{\vee} + x_{i_1}\cdots x_{i_s}x_nI(G\setminus\{y_n \cup N(y_n)\})^{\vee}.
\end{equation}
This is a $y_n$-partition of $I(G)^{\vee}$. By Remark \ref{isolatedvertex}, $I(G\setminus \{y_n\})^{\vee}
= I(G')^{\vee}$, and $I(G\setminus\{y_n \cup N(y_n)\})^{\vee} = I(G'')^{\vee}$. Moreover, by
 Lemma \ref{cmsubgraphs}, $G'$ and $G''$ are both Cohen-Macaulay. By Theorem \ref{eagonreiner}, 
$I(G')^{\vee}$ has a linear resolution,
and thus, so does $y_nI(G')^{\vee}$. It then follows from Corollary~\ref{c.onevar} that (\ref{splitcover})
is a Betti splitting of $I(G)^{\vee}$.  Since we are only interested in the total Betti numbers, we get
\[
\beta_{i}(I(G)^{\vee}) = \beta_{i}( y_nI(G')^{\vee})+
\beta_{i}(x_{i_1}\cdots x_{i_s}x_nI(G'')^{\vee})+ \beta_{i-1}( y_nI(G')^{\vee} \cap x_{i_1}\cdots x_{i_s}x_nI(G'')^{\vee}).
\]
Note that $\beta_i(y_nI(G')^{\vee}) = \beta_i(I(G')^{\vee})$ and 
$\beta_i(x_{i_1}\cdots x_{i_s}x_nI(G'')^{\vee}) = \beta_i(I(G'')^{\vee})$.  The proof will
then be complete once we prove the claim below since the claim implies
that the right-most expression in the above formula equals $\beta_{i-1}(I(G'')^{\vee})$.

\noindent
{\it Claim.}  $y_nI(G')^{\vee} \cap x_{i_1}\cdots x_{i_s}x_nI(G'')^{\vee} = y_nx_{i_1}\cdots x_{i_s}x_nI(G'')^{\vee}$.
\vspace{.25cm}

\noindent
{\it Proof of the Claim.}  Note that 
$y_nx_{i_1}\cdots x_{i_s}x_nI(G'')^{\vee} \subseteq x_{i_1}\cdots x_{i_s}x_nI(G'')^{\vee}$.  Furthermore, if $m$ is a 
generator of  $y_nx_{i_1}\cdots x_{i_s}x_nI(G'')^{\vee}$,
then $m$ is also in $y_nI(G')^{\vee}$ since $m = y_nx_{i_1}\cdots x_{i_s}x_nm'$ and $x_{i_1}\cdots x_{i_s}m'$ is 
a cover of $G'$.  This gives us
$$y_nI(G')^{\vee} \cap x_{i_1}\cdots x_{i_s}x_nI(G'')^{\vee} \supseteq y_nx_{i_1}\cdots x_{i_s}x_nI(G'')^{\vee}.$$

On the other hand, observe that a minimal generator of
 $y_nI(G')^{\vee} \cap x_{i_1}\cdots x_{i_s}x_nI(G'')^{\vee}$ has the form $\lcm(m_1,m_2)$, where 
$m_1$ is a generator of $y_nI(G')^{\vee}$ and $m_2$ is a generator
of $x_{i_1}\cdots x_{i_s}x_nI(G'')^{\vee}$. We can write $m_1 = y_nm'_1$ and $m_2 = x_{i_1}\cdots x_{i_s}x_nm'_2$ 
where $m'_1 \in I(G')^{\vee}$ and $m'_2 \in I(G'')^{\vee}$. Now, it is easy to see that 
$y_nx_{i_1}\cdots x_{i_s}x_nm'_2$ divides $\lcm(m_1,m_2)$. Thus, 
$\lcm(m_1,m_2) \in y_nx_{i_1}\cdots x_{i_s}x_nI(G'')^{\vee}$, and we have the other containment. 
\end{proof}

\begin{remark}
Because $G'$ and $G''$ in Theorem~\ref{rescmbipartite} are Cohen-Macaulay and bipartite, we can compute the Betti numbers of $I(G)^{\vee}$ recursively.  These graded Betti
numbers do not depend upon the characteristic of the field.
\end{remark}

We can now easily recover a special case of Kummini's \cite[Theorem 1.1]{K}. Recall that two edges $\{x_{i_1},x_{i_2}\}$ and $\{x_{i_3},x_{i_4}\}$ of a graph $G$ are said to be \textbf{3-disjoint} (or \textbf{disconnected}) if the induced subgraph of $G$ on $\{x_{i_1},\dots,x_{i_4}\}$ consists of two disjoint edges (i.e, it is the complement of a 4-cycle). (See \cite[Definition 6.3]{HVThyper}.) For a graph $G$, write $a(G)$ for the maximum size of a set of pairwise 3-disjoint edges in $G$; this is the largest number of edges in an induced subgraph of $G$ in which each connected component is an edge.

\begin{corollary} \label{pdim-cmbipartite}
Let $G$ be a Cohen-Macaulay bipartite graph. Then $\operatorname{pd}(I(G)^{\vee})=\reg(R/I(G)) = a(G)$.
\end{corollary}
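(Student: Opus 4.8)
The plan is to prove the two equalities separately; the first is a general fact, and the second is an induction that mirrors Theorem~\ref{rescmbipartite}.

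For $\operatorname{pd}(I(G)^{\vee}) = \reg(R/I(G))$ I would use no special property of Cohen-Macaulay bipartite graphs: refining Theorem~\ref{eagonreiner}, Terai's duality theorem gives $\reg(I) = \operatorname{pd}(R/I^{\vee})$ for every squarefree monomial ideal $I$, and since $\operatorname{pd}(R/I^{\vee}) = \operatorname{pd}(I^{\vee}) + 1$ and $\reg(I) = \reg(R/I) + 1$, this rearranges to $\operatorname{pd}(I^{\vee}) = \reg(R/I)$; I would apply it to $I = I(G)$. The remaining claim, $\operatorname{pd}(I(G)^{\vee}) = a(G)$, I would prove by induction on the number of vertices of $G$; the inductive step is driven by Theorem~\ref{rescmbipartite}. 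With $G' = G\setminus\{x_n,y_n\}$ and $G'' = G\setminus\{x_{i_1},y_{i_1},\ldots,x_{i_s},y_{i_s},x_n,y_n\}$, that theorem gives $\beta_i(I(G)^{\vee}) = \beta_i(I(G')^{\vee}) + \beta_i(I(G'')^{\vee}) + \beta_{i-1}(I(G'')^{\vee})$ with all terms nonnegative, so reading off the largest $i$ with $\beta_i\neq 0$ yields
\[ \operatorname{pd}(I(G)^{\vee}) = \max\{\operatorname{pd}(I(G')^{\vee}),\ \operatorname{pd}(I(G'')^{\vee})+1\} \]
(equivalently, this is Corollary~\ref{reg-pd}(b) for the Betti splitting built in the proof of Theorem~\ref{rescmbipartite}). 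Since $G'$ and $G''$ are Cohen-Macaulay bipartite by Lemma~\ref{cmsubgraphs} and have strictly fewer vertices, the induction hypothesis rewrites the right-hand side as $\max\{a(G'),\ a(G'')+1\}$, and the corollary reduces to the combinatorial identity $a(G) = \max\{a(G'),\ a(G'')+1\}$.

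To prove "$\geq$" I would note that $a(G)\geq a(G')$ since $G'$ is an induced subgraph of $G$, and that adjoining the edge $\{x_n,y_n\}$ to a maximum set of pairwise 3-disjoint edges of $G''$ yields a pairwise 3-disjoint set of edges of $G$: the vertices of $G$ lying on or adjacent to $\{x_n,y_n\}$ are exactly $\{x_n,y_n\}\cup N(y_n)$ (because $N(x_n) = \{y_n\}$), and this set is disjoint from the vertex set of $G''$. For "$\leq$" I would take a maximum pairwise 3-disjoint set $M$ of edges of $G$. As 3-disjoint edges are in particular vertex-disjoint and $x_n$'s only neighbour is $y_n$, every edge of $G$ meeting $\{x_n,y_n\}$ contains $y_n$, so $M$ has at most one such edge $e_0$; if it has none then $M\subseteq E(G')$ and $a(G)\leq a(G')$. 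If $e_0 = \{x_{i_j},y_n\}$ with $x_{i_j}\neq x_n$, I would swap $e_0$ for $\{x_n,y_n\}$: any other edge of $M$, being 3-disjoint from $e_0$, has both endpoints outside $N(y_n)\cup\{y_n\} = \{x_n,y_n\}\cup N(y_n)$, which is precisely what makes it 3-disjoint from $\{x_n,y_n\}$, so the swap preserves both the 3-disjointness and the size. We may thus assume $\{x_n,y_n\}\in M$, so that every other edge $e\in M$ has both endpoints outside $\{x_{i_1},\ldots,x_{i_s},x_n,y_n\}$. It then suffices to show such an $e$ also avoids every $y_{i_j}$, for then $M\setminus\{\{x_n,y_n\}\}$ is a pairwise 3-disjoint set of edges of $G''$ and $a(G)\leq a(G'')+1$; and if instead $e = \{x_k,y_{i_j}\}$, then $k\leq i_j$ by Theorem~\ref{cmbipartite}(b), $k = i_j$ is excluded since $x_{i_j}\in N(y_n)$ is already ruled out, and then the edges $\{x_k,y_{i_j}\}$ and $\{x_{i_j},y_n\}$ with $k<i_j<n$ force $\{x_k,y_n\}\in E$ by Theorem~\ref{cmbipartite}(c), i.e.\ $x_k\in N(y_n)$, a contradiction.

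The base case of the induction is $G$ edgeless, where $I(G)^{\vee} = R$, so $\operatorname{pd}(I(G)^{\vee}) = 0 = a(G)$; the single-edge case ($I(G)^{\vee} = (x_1,y_1)$, $\operatorname{pd}(I(G)^{\vee}) = 1 = a(G)$) is a convenient check on the recursion. I expect the main obstacle to be the combinatorial identity, specifically the normalization inside the "$\leq$" direction: arranging a maximum 3-disjoint family so that it contains $\{x_n,y_n\}$ with all remaining edges inside $G''$. The one delicate point is excluding edges through $y_{i_1},\ldots,y_{i_s}$, which is exactly where conditions (b) and (c) of Herzog and Hibi's structure theorem (Theorem~\ref{cmbipartite}) are genuinely needed; the homological side of the argument is essentially formal once the Betti splitting of Theorem~\ref{rescmbipartite} is in place.
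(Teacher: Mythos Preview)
Your proof is correct and shares the paper's skeleton: both use Alexander/Terai duality for the first equality and induct via the Betti splitting of Theorem~\ref{rescmbipartite}, arriving at $\operatorname{pd}(I(G)^{\vee}) = \max\{a(G'),\,a(G'')+1\}$ by Corollary~\ref{reg-pd} and the inductive hypothesis. The difference is in how the inequality $a(G)\le \operatorname{pd}(I(G)^{\vee})$ is obtained. The paper invokes Katzman's general lower bound $\reg(R/I(G))\ge a(G)$ (valid for all graphs) and then needs only the easy combinatorial direction $a(G)\ge \max\{a(G'),\,a(G'')+1\}$, i.e.\ your ``$\ge$'' argument, to close the sandwich. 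You instead establish the full combinatorial identity $a(G)=\max\{a(G'),\,a(G'')+1\}$ directly, supplying the harder ``$\le$'' direction via the swap-to-$\{x_n,y_n\}$ normalization and the use of Herzog--Hibi conditions (b) and~(c) to push the remaining edges into $G''$. That extra work buys you a self-contained proof with no dependence on \cite{Katzman}; the paper's version is shorter but imports the lower bound from outside.
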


\begin{proof}
By Alexander duality of square-free monomial ideals, we have  $\operatorname{pd}(I(G)^{\vee})=\reg(R/I(G))$. By \cite[Lemma 2.2]{Katzman}, $\operatorname{reg}(R/I(G))\ge a(G)$. Let $G'$ and $G''$ be as in Theorem~\ref{rescmbipartite}. Clearly $a(G) \ge a(G')$. Additionally, $a(G) \ge 1+a(G'')$ since we can add the edge $\{x_n,y_n\}$ to any set of pairwise 3-disjoint edges of $G''$ to obtain a corresponding set of edges of $G$.  Because Theorem~\ref{rescmbipartite} gives a Betti splitting, by Corollary~\ref{reg-pd}, \[\operatorname{pd}(I(G)^{\vee}) = \max \{\operatorname{pd} (I(G')^{\vee}), \operatorname{pd}(I(G'')^{\vee})+1 \} = \max\{a(G'),a(G'')+1\} \le a(G),\] where the second equality follows by induction. Hence $\reg(R/I(G)) = \operatorname{pd}(I(G)^{\vee})=a(G)$.
\end{proof}

The connection between regularity and disconnected edges first appeared in Zheng's paper \cite{Zheng} and was extended to the case of chordal graphs in \cite[Corollary 6.9]{HVThyper}. Kummini showed that the conclusion to Corollary \ref{pdim-cmbipartite} is still true if $G$ is a bipartite graph whose edge ideal is unmixed.  The minimal resolutions of bipartite graphs whose edge ideals are unmixed were also studied by Mohammadi and Moradi \cite{MM}; in this paper, the regularity of $I(G)$ is given in terms of a lattice constructed from the minimal vertex covers of $G$.

  
\section{Observations from computational experiments and splittings in positive characteristic}  
 
We ran a large number of computational tests in Macaulay 2 when working on this project, trying to
 understand convenient combinatorial or algebraic conditions under which a monomial ideal has a 
Betti splitting. We were particularly interested in finding $x_i$-splittings for monomial ideals, 
and the tests we ran indicate that it is extremely rare for a monomial ideal to have no 
$x_i$-splitting. Out of tens of thousands of tests in Macaulay 2, we found only a handful of 
examples. This suggests that the notion of an $x_i$-splitting can be particularly helpful when 
investigating Betti numbers, particularly in inductive arguments in which one inducts on the 
dimension of the ring (in the combinatorial setting, on the number of vertices of a graph, hypergraph, or 
simplicial complex).  

We mention a few examples of ideals with no $x_i$-splitting.  We wish to highlight
an apparent connection between monomial
ideals with no $x_i$-splitting and monomial ideals whose resolutions
are characteristic-dependent.

\begin{example}
One particularly interesting example is the 
Stanley-Reisner ideal of a triangulation of the real projective plane:
\[ I = (x_1x_2x_4,x_1x_2x_6,x_1x_3x_5,x_1x_3x_4,x_1x_5x_6,x_2x_4x_5,x_2x_3x_6,x_2x_3x_5,x_3x_4x_6,x_4x_5x_6).\]
 Consider the $x_1$-partition; the behavior is the same for the other variables. Assume the 
characteristic of $k$ is not 2. We have \[J =(x_1x_2x_4,x_1x_2x_6,x_1x_3x_5,x_1x_3x_4,x_1x_5x_6), \text{ and}\] 
\[K =(x_2x_4x_5,x_2x_3x_6,x_2x_3x_5,x_3x_4x_6,x_4x_5x_6).\] Additionally, $J \cap K = x_1K$ (just as with an
 $x_1$-partition of a stable ideal). The minimal resolutions of $J$, $K$, and $J \cap K$ are all 
\[ 0 \longrightarrow R^1 \longrightarrow R^5 \longrightarrow R^5 \longrightarrow L \longrightarrow 0,\] 
(where $L$ is standing in for $J,K$, or $J \cap K$) and the minimal 
resolution of $I$ is 
\[ 0 \longrightarrow R^6 \longrightarrow R^{15} \longrightarrow R^{10} \longrightarrow I \longrightarrow 
0.\] 
If this were a Betti splitting of $I$, then $I$ would have minimal resolution 
\[ 0 \longrightarrow R^1 \longrightarrow R^7 \longrightarrow R^{15} \longrightarrow R^{10} \longrightarrow I \longrightarrow 0,\] 
so $I=J+K$ is not an $x_1$-splitting.

However, when the characteristic of $k$ is 2, the minimal resolution of $I$ has extra syzygies in the 
multidegree $j$ corresponding to $x_1x_2x_3x_4x_5x_6$; $\beta_{2,j}(I)=\beta_{3,j}(I)=1$. Now the fact that 
$\beta_{2,j}(J)=\beta_{2,j}(J \cap K)=1$ is no longer a problem, and $I=J+K$ is an $x_1$-splitting; in
 fact, $I$ admits an $x_i$-splitting for all $x_i$. This is perhaps a rare example of nicer homological 
behavior in an exceptional positive characteristic than in characteristic zero.
\end{example}

\begin{example}
There are other examples of ideals with no $x_i$-splitting in almost all characteristics but an 
$x_i$-splitting in characteristic 2. In seven variables, we have 
\[I'=(x_2x_6x_7,x_1x_6x_7,x_4x_5x_7,x_3x_4x_7,x_1x_4x_7,x_2x_3x_7, x_1x_3x_7,x_4x_5x_6,x_2x_5x_6,x_1x_5x_6,\]
\[x_3x_4x_6,x_2x_4x_6, x_2x_4x_5, x_2x_3x_5,x_1x_3x_5,x_1x_3x_4,x_1x_2x_4),\] which has an extra syzygy in 
the multidegree corresponding to $x_1x_2x_3x_4x_5x_6x_7$ in characteristic two. $I'$ has an 
$x_4$-splitting in characteristic 2 (and no other $x_i$-splitting) but no $x_i$-splitting in other characteristics.
\end{example}

Not all examples of monomial ideals with no $x_i$-splitting have characteristic-dependent resolution. 
If $M$ is the ideal generated by the minimal generators of the ideal $I'$ above 
except for $x_1x_3x_4$, then the Betti 
numbers of $M$ do not depend on the characteristic of $k$. However, $M$ has no $x_i$-splitting. 
Moreover, not all ideals with characteristic-dependent resolutions fail to have an $x_i$-splitting 
in some characteristic. Katzman constructed a number of examples of edge ideals of graphs whose 
Betti numbers depend on the ground field \cite{Katzman}, but by Corollary~\ref{c.edgesplit}, these 
edge ideals all have $x_i$-splittings. Nevertheless, since almost every monomial ideal we tested has 
an $x_i$-splitting, and the exceptions are related to ideals whose resolutions are characteristic-dependent, 
we conclude by asking the following very broad question:

\begin{question} \label{q.characteristic}
Is the class of monomial ideals with no $x_i$-splitting somehow connected to the class of monomial ideals 
whose resolutions depend upon the ground field?
\end{question}


\end{document}